\documentclass{article}
\usepackage{graphicx} 
\usepackage{amsthm}
\usepackage{amsmath}
\usepackage{mathtools}
\usepackage{amssymb}
\usepackage{xcolor}
\usepackage{todonotes}
\usepackage{cite}
\usepackage[hidelinks]{hyperref}
\usepackage{cleveref}
\usepackage{authblk}
\usepackage{makecell}
\usepackage{booktabs}
\usepackage{multirow}
\usepackage{rotating}    
\usepackage{array}
\usepackage{footnote} 
\usepackage{tikz}
\usepackage{pgfplots}
\pgfplotsset{compat=1.18}

\newtheorem{theorem}{Theorem}

\newtheorem{proposition}[theorem]{Proposition}
\newtheorem{observation}[theorem]{Observation}

\newcommand{\nc}{\newcommand}
\nc{\beginquote}{“}
\nc{\bR}{\mathbb{R}}

\nc{\st}{dx} 
\nc{\stc}{dc}
\nc{\sts}{ds}
\nc{\stP}{dP}
\nc{\stQ}{dQ}
\nc{\opfcost}{F} 
\nc{\stS}{dS} 
\nc{\boY}{{\mathbf{Y}}}
\nc{\boI}{{\mathbf{I}}}
\nc{\boV}{{\mathbf{V}}}
\nc{\boS}{{\mathbf{S}}}
\nc{\boG}{\mathcal{G}}
\nc{\integers}{\mathbb{Z}}
\nc{\cN}{\mathcal{N}}
\nc{\cC}{\mathcal{C}}
\nc{\cI}{\mathcal{I}}
\nc{\cP}{\mathcal{P}}
\nc{\cE}{\mathcal{E}}
\nc{\cS}{\mathcal{S}}
    
\nc{\barra}{|}
\nc{\boundsym}{b}
\newcommand{\abs}[1]{\left|#1\right|}
\nc{\lowb}{\mathbf{\boundsym}^0}
\nc{\upb}{\mathbf\boundsym^1}
\newcommand{\bound}[2]{\boundsym^{#1}_{#2}}

\newcommand{\boundu}[1]{\bound{2}{v}}
\newcommand{\boundlv}[1]{\bound{l}{v}}
\newcommand{\bounduv}[1]{\bound{2}{v}}
\DeclareMathOperator{\monom}{\omega}
\nc{\hyperC}{\text{P}}
\nc{\ConvexHull}[1]{\text{conv}(#1)}
\nc{\ConcaveHull}[1]{\text{conc}(#1)}

\nc{\vertex}{v}
\nc{\wertex}{w}
\nc{\ivertex}{v}
\nc{\evertex}{w}
\nc{\arc}{e}
\nc{\aarc}{\overline{\arc}}
\nc{\circuit}{C}
\nc{\sizecircuit}{L}
\nc{\digraph}{D}
\nc{\cycle}{C}
\nc{\Path}{P}
\nc{\V}[1]{V_{#1}}
\nc{\Vk}{\V{k}}
\nc{\Vm}{\V{m}}
\nc{\buses}{\mathcal{B}}
\nc{\lines}{\mathcal{L}}
\nc{\bus}{k}
\nc{\buss}{m}
\nc{\lin}{\bus\buss}
\nc{\rlin}{\buss\bus}
\nc{\cedge}[1]{c_{#1}}
\nc{\ckm}{\cedge{\lin}}
\nc{\elecgraph}{\mathcal{N}}
\nc{\genset}{\mathcal{G}}
\nc{\pow}{P}
\nc{\rpow}{Q}
\newcommand{\injP}[1]{\pow_{#1}}
\newcommand{\injQ}[1]{\rpow_{#1}}
\nc{\pl}{\injP{\lin}}
\nc{\ql}{\injQ{\lin}}
\newcommand{\pg}[1]{P^G_{#1}}
\newcommand{\qg}[1]{Q^G_{#1}}
\nc{\pgg}{\pg{g}}
\nc{\qgg}{\qg{g}}

\nc{\plk}{\pl{\bus}}
\nc{\qlk}{\ql{\bus}}

\title{Theoretical Perspectives on Jabr-Type Convex Relaxations for AC Optimal Power Flow}

\author[1]{Gabor~Riccardi\thanks{Corresponding author. Email: \texttt{gabor.riccardi01@universitadipavia.it}}}
\author[1]{Ambrogio~Maria~Bernardelli}
\author[1]{Stefano~Gualandi}

\affil[1]{Department of Mathematics ``F. Casorati'', University of Pavia, 27100 Pavia, Italy}
\date{\vspace{-1.5em}}
\providecommand{\keywords}[1]{\textbf{Keywords} #1}
\begin{document}

\maketitle

\begin{abstract}
\noindent%
The alternating current optimal power flow problem is a fundamental yet highly nonconvex optimization problem whose structure reflects both nonlinear power flow physics and the topology of the underlying network. Among convex relaxations, the second-order cone relaxation introduced by Jabr has proven particularly influential, serving as a computationally efficient alternative to semidefinite relaxations and a foundation for numerous strengthening techniques.
In recent years, a variety of approaches have been proposed to tighten Jabr-type relaxations, including cycle-based constraints, convex envelopes of multilinear terms, and dual reformulations. However, these developments are often presented independently, concealing their common geometric and graph-theoretic foundations.
This paper provides a structured review of strengthening techniques for the Jabr relaxation and develops a unifying perspective based on multilinear equalities. We reinterpret cycle constraints as multilinear consistency conditions, analyze their convexification through classical convex hull theory, and investigate the relationship between primal McCormick relaxations and dual extended formulations. In particular, we identify structural conditions under which these relaxations coincide and clarify the distinction between convexifying the interaction graph and convexifying the feasible set of the ACOPF.
The resulting framework connects graph structure, multilinear convexification, and conic relaxations in a unified manner, offering both a conceptual synthesis of existing results and new insights for the design of stronger relaxations.
\end{abstract}
\keywords{AC optimal power flow, cycle constraints, Jabr relaxation}

\section{Introduction}



The alternating current optimal power flow (ACOPF) problem, originally introduced in the early work of Carpentier \cite{carpentier1962contribution}, is a central model in electric power systems and a canonical example of a large-scale nonconvex network optimization problem. Its nonconvexity stems from the nonlinear AC power flow equations, which couple voltage magnitudes and phase angles via bilinear and trigonometric relations. The structural and computational challenges of ACOPF have been documented extensively; in particular, feasibility and optimization versions of the problem are strongly NP-hard \cite{NP_HARD, lehmann2015ac}. Comprehensive historical and technical accounts can be found in \cite{HistoryACOPF, ACOPF_Formulations, molzahn2019survey}.

Because of this intrinsic nonconvexity, convex relaxations have become fundamental tools for both theoretical analysis and global optimization approaches. Semidefinite programming (SDP) relaxations have received considerable attention due to their tightness and strong theoretical properties \cite{low2014convex, low2014convex2}. Variants and strengthened conic relaxations have further improved computational performance and bounds \cite{SDP, Oustry}. Surveys of conic relaxations and their properties are provided in \cite{zohrizadeh2020survey, molzahn2019survey}.

To address scalability concerns associated with SDP, second-order cone programming (SOCP) relaxations have emerged as attractive alternatives. In particular, the conic formulations introduced by Jabr \cite{Jabr_Relaxation, jabr2007conic, jabr2008optimal} provide a computationally efficient relaxation that preserves much of the network structure. Theoretical equivalence and relationships between different ACOPF formulations, including branch-flow and bus-injection models, are discussed in \cite{equivalence-i2-SOCP, farivar2013branch, farivar2013branch2}. Exactness results in radial networks have been established under various conditions \cite{gan2014exact, nick2017exact, huang2016sufficient}, highlighting the fundamental role played by network topology.

In meshed networks, however, the gap between SOCP relaxations and the nonconvex feasible set is closely tied to cycle consistency conditions. Over the past decade, several strengthening approaches have been proposed to address this issue. Cycle-based constraints and strong SOCP relaxations were introduced in \cite{cycle_Constr, kocuk2018matrix}. Additional enhancements include SDP-inspired cuts \cite{hijazi2016polynomial, miao2017least}, McCormick-based and quadratic convex relaxations \cite{weakQC, bynum2018strengthened}, and lifted nonlinear cuts \cite{bugosen2024applications}. These developments demonstrate that tightening the Jabr-type relaxation is intimately connected with capturing nonlinear relations induced by cycles in the network graph.

Although these techniques share common structural features, they are often developed in different frameworks, making their interrelations less clear. In particular, cycle consistency constraints can be interpreted as structured multilinear equalities, whose convexification relates directly to classical results on convex envelopes of multilinear functions \cite{McCormick1976, Rikun1997, trilinear_convex_envelope}. The strength of different relaxations then depends on the interaction graph induced by these multilinear terms and on how its convex hull is approximated \cite{Bao2009, leudke2012, DualIsBetterThanPrimal}.

\medskip
\noindent
\textbf{Contributions.}
This paper provides a structured review of strengthening techniques for the Jabr SOCP relaxation and develops a unifying perspective based on multilinear convexification and interaction graphs. Our contributions are as follows:

\begin{enumerate}
    \item We synthesize and organize existing strengthening approaches for Jabr-type relaxations, emphasizing their common structural foundations.
    \item We reinterpret cycle consistency conditions as multilinear equalities and connect them to classical convex hull results for multilinear functions.
    \item We analyze convexification strategies through the interaction graph of these multilinear relations, clarifying how network topology governs relaxation strength.
    \item We compare primal McCormick relaxations and dual extended formulations, identifying structural conditions under which these relaxations coincide.
    \item We distinguish between convexifying local graph-induced structures and convexifying the global ACOPF feasible set, thereby delineating the limits of cycle-based strengthening.
\end{enumerate}

By bridging graph structure, multilinear convexification, and conic relaxations, the paper offers both a conceptual synthesis of the literature and new structural insights for the design of stronger relaxations.

\medskip
\noindent
\textbf{Outline.}
The remainder of the paper is organized as follows.
Section~2 introduces the ACOPF formulation and the Jabr equality and SOCP relaxations.
%
Section~3 develops a unified perspective on strengthening techniques for the Jabr relaxation, interpreting cycle constraints as multilinear equalities, analyzing their convexification via interaction graphs, and comparing primal McCormick and dual extended formulations.
Section~4 concludes with implications and directions for further research.

\section{Formal Problem Statement and Jabr’s SOCP Relaxation}
In this section, we give the polar formulation of ACOPF and the formulation of the Jabr equality and inequality relaxations. Let us take a power network modeled as a digraph $\elecgraph = (\buses, \lines)$, where the set of nodes $\buses$ represents the set of buses and the set of arcs $\lines$ represents the set of transmission lines. Note that if a line exists between two buses $\bus$ and $\buss$, we have that both $\lin, \rlin \in \lines$. For every bus $\bus \in \buses$, we have a (possibly empty) set of electric power generators $\genset(\bus)$ and a certain electric power demand, also called load. The problem consists of meeting the energy demand at every bus, and doing so with the lowest possible energy generation cost. The solution must also obey Ohm's Law and Kirchhoff's Law and operational constraints. The physical characteristics of the network are described by the nodal admittance matrix $Y$, with components $Y_{\lin} = G_{\lin} + \mathrm{i}B_{\lin}$ for each line $\lin \in \lines$, and $G_{\bus\bus} = g_{\bus\bus} - \sum_{\buss \neq \bus}G_{\lin}$, $B_{\bus\bus} = b_{\bus\bus} - \sum_{\buss \neq \bus}B_{\lin}$, where $g_{\bus\bus}$ and $b_{\bus\bus}$ are the conductance and susceptance, respectively, for each bus $\bus \in \buses$, and $\mathrm{i}$ is the imaginary unit. We then have the complex voltage $\Vk = |\Vk|(\cos(\delta_{\bus}) + \mathrm{i}\sin(\delta_{\bus}))$ at each bus, active and reactive power output at the generator $g$ denoted by $\pgg$, $\qgg$, active and reactive power load at node $\bus$ denoted by $P^L_{\bus}$, $Q^L_{\bus}$, active and reactive power injected into branch $\lin$ denoted by $P_{\lin}$, $Q_{\lin}$. By dividing the physical laws, the power balances, and the variable limits into the real and imaginary components, we can write the following polar formulation:
\begin{subequations}\label{eq:ACOPF-polar}
\footnotesize
\begin{alignat}{3}
\min \;\;
  & \sum\limits_{g \in \genset} F_g(P^G_g) 
  & \;\; &  \label{DCOPF_obj2} \\
\text{s.t.} \;\;
  & \sum\limits_{\lin \in \lines} P_{\lin} + P^L_{\bus} - \sum\limits_{g \in \genset(\bus)} P^G_{g} = 0 
  & \;\; & \forall k \in \buses \label{Polar_flow_P} \\
& \sum\limits_{\lin \in \lines} Q_{\lin} + Q^L_{\bus} - \sum\limits_{g \in \genset(\bus)} Q^G_{g} = 0 
  & \;\; & \forall k \in \buses \label{Polar_flow_Q} \\
& P_{\lin} = G_{\bus\bus}|V_{\bus}|^2 + |V_{\bus}||V_{\buss}|(G_{\lin}\cos(\delta_{\bus}-\delta_{\buss}) - B_{\lin}\sin(\delta_{\bus}-\delta_{\buss})) 
  & \;\; & \forall \lin \in \lines \label{Polar_Elec_P} \\
& Q_{\lin} = -B_{\bus\bus}|V_{\bus}|^2 + |V_{\bus}||V_{\buss}|(-B_{\lin}\cos(\delta_{\bus} - \delta_{\buss}) + G_{\lin}\sin(\delta_{\bus} - \delta_{\buss})) 
  & \;\; & \forall \lin \in \lines\label{Polar_Elec_Q} \\
& P^{min}_{\bus} \leq P^G_{\bus} \leq P^{max}_{\bus} 
  & \;\; & \forall k \in \genset \label{Polar_Powerlim_P} \\
& Q^{min}_{\bus} \leq Q^G_{\bus} \leq Q^{max}_{\bus} 
  & \;\; & \forall k \in \genset \label{Polar_Powerlim_Q} \\
& (V_{\bus}^{min})^2 \leq |V_{\bus}|^2 \leq (V_{\bus}^{max})^2 
  & \;\; & \forall k \in \buses \label{Voltage_Limit}\\
& \theta_{\lin}^{min} \leq \delta_{\bus} - \delta_{\buss} \leq  \theta_{\lin}^{max}
  & \;\; & \forall k \in \buses \label{Polar_anglelimit} \\
  & P_{\lin}^2 + Q_{\lin}^2 \leq U_{\lin} 
  & \;\; & \forall \lin \in \lines \label{thermal_limit_polar}
\end{alignat}
\end{subequations}
where $F_g(P^G_g)$ is a linear or quadratic function and $\genset$ is the set of all generators, that is, $\genset \coloneqq \bigcup_{k \in \buses}\genset(\bus)$. Constraints~\eqref{Polar_flow_P}--\eqref{Polar_flow_Q} represent the flow conservation equation of the active and reactive power $P$ and $Q$. Constraints~\eqref{Polar_Elec_P}--\eqref{Polar_Elec_Q} are the expressions for the active and reactive power injected into branch $\lin$. Constraints~\eqref{Polar_Powerlim_P}--\eqref{Polar_anglelimit} represent generator active/reactive power limits, voltage magnitude limits, and angle difference limits, while Constraint~\eqref{thermal_limit_polar} corresponds to the thermal limits of the lines.

\subsection{Jabr equality relaxation}

We first consider a well-known variant of the Jabr relaxation model~\cite{Jabr_Relaxation}, derived from the polar ACOPF formulation. In this approach, the bilinear voltage terms are replaced by new variables:
\begin{align*}
    c_{\lin} = |V_{\bus}||V_{\buss}|\cos(\theta_{\lin}), & & 
    s_{\lin} = |V_{\bus}||V_{\buss}|\sin(\theta_{\lin}),
\end{align*}
where $\theta_{\lin} \coloneqq \delta_{\bus} - \delta_{\buss}$. From this definition, it follows that
\begin{equation*}
    |V_{\bus}|^2 = |V_{\bus}||V_{\bus}|\cos(\theta_{\bus\bus}),
\end{equation*}
which motivates the substitution
\begin{align*}
    c_{\bus\bus} = |V_{\bus}|^2, & &  s_{\bus\bus} = 0.
\end{align*}
To ensure consistency, we impose the following constraints based on properties of trigonometric functions and voltage magnitudes:
\begin{align*}
    c_{\bus\bus} \geq 0, & &
    c_{\lin} = c_{\rlin}, & & 
    s_{\lin} = -s_{\rlin}.
\end{align*}
Finally, by exploiting the identity
\begin{equation*}
    (|V_{\bus}||V_{\buss}|\cos(\theta_{\lin}))^2 + (|V_{\bus}||V_{\buss}|\sin(\theta_{\lin}))^2 = |V_{\bus}|^2|V_{\buss}|^2,
\end{equation*}
we introduce the quadratic constraint:
\begin{equation*}
    c_{\lin}^2 + s_{\lin}^2 = c_{\bus\bus}c_{\buss\buss}.
\end{equation*}

We obtain the following formulation
\begin{subequations} \label{eq:jabr-equality}
    \footnotesize
    \begin{alignat}{3}
      \min \;\;
  & \sum\limits_{g \in \genset} F_g(P^G_g) 
  & \;\; &   \\
\text{s.t.} \;\;
  & \sum\limits_{\lin \in \lines} P_{\lin} + P^L_{\bus} - \sum\limits_{g \in \genset(\bus)} P^G_{g} = 0 
  & \;\; & \forall k \in \buses \label{Power Flow Constraint J} \\
& \sum\limits_{\lin \in \lines} Q_{\lin} + Q^L_{\bus} - \sum\limits_{g \in \genset(\bus)} Q^G_{g} = 0 
  & \;\; & \forall k \in \buses \label{Power Flow Constraint J 2} \\
& P_{\lin} = G_{\bus\bus}c_{\bus\bus} + G_{\lin}c_{\lin} - B_{\lin}s_{\lin}
  & \;\; & \forall \lin \in \lines \label{c to P} \\
& Q_{\lin} = -B_{\bus\bus}c_{\bus\bus} - B_{\lin}c_{\lin} + G_{\lin}s_{\lin}
  & \;\; & \forall \lin \in \lines \label{c to Q} \\
& P^{min}_{\bus} \leq P^G_{\bus} \leq P^{max}_{\bus} 
  & \;\; & \forall k \in \genset  \label{Power generation Magnitude Constraint J} \\
& Q^{min}_{\bus} \leq Q^G_{\bus} \leq Q^{max}_{\bus} 
  & \;\; & \forall k \in \genset \label{Power generation Magnitude Constraint J 2}\\
& (V_{\bus}^{min})^2 \leq c_{\bus\bus} \leq (V_{\bus}^{max})^2 
  & \;\; & \forall k \in \buses \label{Voltage Magnitude Constraint J}\\
  & P_{\lin}^2 + Q_{\lin}^2 \leq U_{\lin} 
  & \;\; & \forall \lin \in \lines \label{Power Flow Magnitude Constraint J}\\
  & -\tan(\eta_{\lin})\,c_{\lin} \le s_{\lin} \le \tan(\eta_{\lin})\,c_{\lin}
& \;\; & \forall \lin \in \lines \\
& c_{\lin} \ge 0 
& \;\; & \forall \lin \in \lines \label{Jabr voltage diff contraint}\\
  & c_{\bus\bus} \geq 0, & \;\; &\forall k \in \buses \\
  & c_{\lin} = c_{\rlin}, \; s_{\lin} = - s_{\rlin}, & \;\; &\forall \lin \in \lines \\
  & c_{\lin}^2 + s_{\lin}^2 = c_{\bus\bus}c_{\buss\buss} & \;\; & \forall \lin \in \lines \label{constr:jabr-eq}
    \end{alignat}
\end{subequations}
%
%
which we will refer to as the \emph{Jabr equality ACOPF relaxation}.

This relaxation is in general not exact, as a feasible solution for this model may not be mapped to a feasible solution of~\eqref{eq:ACOPF-polar}. In~\cite{Jabr_Relaxation}, the corresponding feasibility problem is shown to be exact when the underlying network is a tree, i.e., when the system is radial with possibly multiple sources. In this section, we provide an explicit proof of this exactness result.

\begin{proposition}
\label{Jabr equality model exactness}
    If $(\buses,\lines)$ is a multisource radial network, then the Jabr equality ACOPF relaxation~\eqref{eq:jabr-equality} is exact. 
\end{proposition}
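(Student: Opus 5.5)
To prove exactness, we will show that every feasible point $(c,s,P,Q,P^G,Q^G)$ of~\eqref{eq:jabr-equality} is the image of a feasible point of~\eqref{eq:ACOPF-polar} with the same objective value. Since the objective depends only on $P^G$ and both models share the balance, generation and thermal constraints in the variables $P,Q,P^G,Q^G$, it suffices to construct voltages $|V_\bus|$ and angles $\delta_\bus$ that satisfy
\[
|V_\bus|^2=c_{\bus\bus},\qquad |V_\bus||V_\buss|\cos(\delta_\bus-\delta_\buss)=c_{\lin},\qquad |V_\bus||V_\buss|\sin(\delta_\bus-\delta_\buss)=s_{\lin}
\]
for all $\bus\in\buses$ and $\lin\in\lines$. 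Once these hold, \eqref{c to P}--\eqref{c to Q} turn into \eqref{Polar_Elec_P}--\eqref{Polar_Elec_Q}, and \eqref{Voltage Magnitude Constraint J} turns into \eqref{Voltage_Limit}. The converse inclusion, namely that every polar solution yields a Jabr point, follows directly from the derivation of the substitution given before the statement.

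\emph{Magnitudes.} We set $|V_\bus|:=\sqrt{c_{\bus\bus}}$, which is well defined because $c_{\bus\bus}\ge 0$. By~\eqref{constr:jabr-eq}, each pair $(c_{\lin},s_{\lin})$ then lies on the circle of radius $|V_\bus||V_\buss|$. Whenever $c_{\bus\bus}c_{\buss\buss}>0$, there is therefore a unique $\theta_{\lin}\in(-\pi,\pi]$ with $c_{\lin}=|V_\bus||V_\buss|\cos\theta_{\lin}$ and $s_{\lin}=|V_\bus||V_\buss|\sin\theta_{\lin}$. The constraints $c_{\lin}\ge0$ and $|s_{\lin}|\le\tan(\eta_{\lin})c_{\lin}$ give $|\theta_{\lin}|\le\eta_{\lin}$, which is the angle-difference limit \eqref{Polar_anglelimit}, taking $\theta^{\max}_{\lin}=-\theta^{\min}_{\lin}=\eta_{\lin}$. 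Since $c_{\lin}=c_{\rlin}$ and $s_{\lin}=-s_{\rlin}$, we get $\theta_{\rlin}=-\theta_{\lin}$. If $c_{\bus\bus}=0$, then $c_{\lin}=s_{\lin}=0$, and any angle will do.

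\emph{Angles via the tree structure.} This is the step where radiality is used. We pick a root bus $r$ in each connected component and set $\delta_r:=0$. We then traverse the tree by breadth-first search, and for each tree edge from an already-labelled bus $\bus$ to a new bus $\buss$ we define $\delta_\buss:=\delta_\bus-\theta_{\lin}$. Because the underlying undirected graph is a forest, every line $\{\bus,\buss\}$ is exactly one tree edge, so each $\delta$ is assigned once and no consistency condition can arise. Indeed, in a meshed network the requirement $\sum_{\text{cycle}}\theta_{\lin}\equiv0 \pmod{2\pi}$ is precisely what could fail. The antisymmetry $\theta_{\rlin}=-\theta_{\lin}$ ensures that the relation also holds for the reverse arc.

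The main obstacle is purely bookkeeping. We must check that the arc set $\lines$, which contains both orientations of every line, really induces a forest once orientations are merged and parallel lines are excluded. We must also handle degenerate zero-voltage buses, which are excluded anyway when $V^{\min}_\bus>0$. With the voltages in hand, the remaining constraints transfer verbatim, which concludes the argument.
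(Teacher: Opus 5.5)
Your proposal is correct and follows essentially the same route as the paper. Both arguments set $|V_k|=\sqrt{c_{kk}}$, read off a line angle $\theta_{km}$ from $(c_{km},s_{km})$ via~\eqref{constr:jabr-eq}, and define bus angles by summing line angles along the unique path from a reference bus, after which the flow, balance and limit constraints transfer unchanged; your version is slightly more careful in that it fixes the signed angle from both cosine and sine (the paper only matches $\cos^2$ and $\sin^2$), uses $c_{km}=c_{mk}$ and $s_{km}=-s_{mk}$ for reverse arcs, and handles forests and zero-voltage buses, where you should simply take $\theta_{km}=0$ rather than ``any angle'' so that the angle limit still holds.
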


\begin{proof}
Let $P_g^G,Q_g^G, c_{\lin}, s_{\lin}$ be an optimal solution to the Jabr equality ACOPF relaxation~\eqref{eq:jabr-equality}. Let $|V_{\bus}| \coloneqq \sqrt{c_{\bus\bus}}$. Since $\frac{c_{\lin}^2}{c_{\bus\bus}c_{\buss\buss}}+\frac{s_{\lin}^2}{c_{\bus\bus}c_{\buss\buss}} = 1$  then there exists $\theta_{\lin}$ such that $\frac{c_{\lin}^2}{c_{\bus\bus}c_{\buss\buss}} = \cos^2(\theta_{\lin})$ and $\frac{s_{\lin}^2}{c_{\bus\bus}c_{\buss\buss}} = \sin^2(\theta_{\lin})$. We can fix a reference bus $r \in \buses$ for which we have $\delta_r = 0 \mod 2\pi$. We need to define the voltage angle $\delta_{\bus}$ for every $\bus \in \buses$ such that
\begin{equation} \label{exactness of theta}
    \theta_{\lin}=\delta_{\bus} - \delta_{\buss}\mod 2 \pi \quad  \forall \lin \in \lines .
\end{equation}
 For each $\bus \in \buses$ let $(k_1,...,k_n)$ be a path in the graph where $\bus_1 = r$ and $\bus_n = k$. We define $\delta_h \coloneqq  \sum_{i=1}^{n-1}\theta_{k_{i+1}k_{i}} \mod 2\pi$. The various voltage angles are well defined because there are no cycles in the graph and therefore there exist only one path connecting $\bus$ and $h$. Furthermore, with this definition,~\eqref{exactness of theta} holds by construction. This is a feasible solution for the ACOPF problem in polar form~\eqref{eq:ACOPF-polar} because the magnitude constraints~\eqref{Power generation Magnitude Constraint J}--\eqref{Power Flow Magnitude Constraint J} and the Power Balance constraints~\eqref{Power Flow Constraint J}--\eqref{Power Flow Constraint J 2} remain the same. Moreover, since with this definition of $|V_{\bus}|$ and $\delta_{\bus}$ we have that $c_{\lin}= |V_{\bus}||V_{\buss}|\cos{\theta_{\lin}}$ and $s_{\lin}= |V_{\bus}||V_{\buss}|\sin{\theta_{\lin}}$, then equations~\eqref{c to P}--\eqref{c to Q} imply that~\eqref{Polar_flow_P}--\eqref{Polar_flow_Q} hold. Lastly, \eqref{Jabr voltage diff contraint} implies the voltage angle constraint \eqref{Polar_anglelimit}. Since the Jabr equality ACOPF~\eqref{eq:jabr-equality} is a relaxation of the Polar ACOPF model~\eqref{eq:ACOPF-polar}, $|V_{\bus}|$ and $\delta_{\bus}$ together constitute an optimal solution of~\eqref{eq:ACOPF-polar}.
\end{proof}

 \subsection{Jabr--SOCP relaxation}
 While model~\eqref{eq:jabr-equality} yields an exact relaxation for radial networks, it is not convex and hard to solve. A convex relaxation can then be obtained by substituting constraint~\eqref{constr:jabr-eq} with:
 \begin{equation} \label{constr:jabr-ineq}
     c_{\lin}^2 + s_{\lin}^2 \leq c_{\bus\bus}c_{\buss\buss}.
 \end{equation}
 We refer to this relaxation as the \emph{Jabr--SOCP relaxation}, which is due to the fact that we can rewrite~\eqref{constr:jabr-ineq} as
 \begin{equation*}
     c_{\lin}^2 + s_{\lin}^2 + \left(\frac{c_{\buss\buss}-c_{\bus\bus}}{2} \right)^2 \leq \left(\frac{c_{\buss\buss}+c_{\bus\bus}}{2} \right)^2,
 \end{equation*}
which is a rotated second order cone in $\mathbb{R}^4$.

Building upon his earlier work on convex formulations for radial distribution systems~\cite{Jabr_Relaxation}, Jabr extended the conic approach to general transmission networks. In~\cite{jabr2007conic}, he proposed an SOCP-based formulation of the AC load-flow equations for meshed networks, introducing additional trigonometric constraints to maintain voltage-angle consistency around network cycles. Jabr incorporated linearized approximations of the angle relationships, effectively replacing the nonlinear arctangent constraint with successive affine approximations. The resulting algorithm solves a sequence of conic programs, each updating the linearization points until voltage-angle consistency is achieved. This approach preserved the computational advantages of conic programming while extending its applicability to large transmission networks.  

In a subsequent paper, Jabr~\cite{jabr2008optimal} integrated this extended conic quadratic (ECQ) representation into the OPF problem. The ECQ-OPF formulation includes common control devices such as tap-changing and phase-shifting transformers and unified power-flow controllers, while retaining the linearity of power injection equations. All nonlinearities are confined to a small set of rotated conic and arctangent constraints, which makes the problem suitable to efficient solution by primal–dual interior-point methods. The structure also supports LP-type scaling techniques that improve numerical conditioning. Tests on large benchmark systems confirmed that the ECQ-OPF achieves accurate and stable results for both economic dispatch and loss minimization, demonstrating the practicality of conic quadratic formulations for large-scale power system optimization.  

Although these formulations yield convex and computationally efficient models, they are generally not exact, as the relaxed feasible region can include points that do not correspond to physically realizable AC states. This limitation motivated extensive research into characterizing conditions for exactness and strengthening the SOCP relaxation, topics we review next.  

Following Jabr’s work, a large body of research examined the conditions under which the SOCP relaxation is exact, i.e., when its solution coincides with that of the nonconvex ACOPF. These conditions are of theoretical importance, since exactness of the SOCP relaxation implies exactness of tighter formulations such as SDP, QC, or strong SOCP relaxations~\cite{low2014convex2}. However, because ACOPF remains NP-hard~\cite{NP_HARD} even for radial networks~\cite{lehmann2015ac}, most results apply only under restrictive or idealized assumptions.

Many of these studies rely on the load oversatisfaction assumption, which allows active and reactive power demands to increase arbitrarily. This facilitates convexity and proof of exactness, but does not reflect realistic load behavior where demand is fixed or voltage-dependent~\cite{nick2017exact,christakou2017ac}. Within this theoretical framework, Gan and coauthors~\cite{gan2014exact} showed that, after a mild reformulation of the OPF equations for radial networks, the corresponding SOCP relaxation becomes exact under verifiable conditions on line parameters. Huang and coauthors~\cite{huang2016sufficient} proved that exactness is verified in radial networks for which there is either no reverse power flow or reverse power flow that consists only of reactive power or only of active power, while Nick and coauthors~\cite{nick2017exact} extended the result to include line shunt components. Chen and coauthors~\cite{chen2018socp} proposed an over-satisfaction condition that guarantees exactness in radial networks, while in~\cite{sojoudi2013convexification} the case of meshed networks is analyzed with many controllable phase-shifting transformers and a load over-satisfaction assumption.

In parallel, Farivar and Low~\cite{farivar2013branch,farivar2013branch2} developed the branch-flow model, which expresses power flow variables in terms of branch currents and squared voltages, admitting convex relaxations that are globally optimal for certain radial systems. Subsequent analyses, notably by Christakou and coauthors~\cite{christakou2017ac}, demonstrated that several of these assumptions (particularly those excluding voltage magnitude or current limits) do not hold in practical networks, highlighting intrinsic limits of such convexifications.

For different SOCP formulations and their equivalence, see~\cite{low2014convex}; for a broader study of exactness and distributed algorithms, see~\cite{low2014convex2}; and for a comprehensive survey of conic relaxations for OPF, see~\cite{zohrizadeh2020survey}.

In addition to these exactness results, much of the literature has focused on strengthening the SOCP relaxation, a topic we return to in \Cref{sec:tightening} together with our own contributions. Before that, we review linearization approaches, which provide an alternative direction for handling the ACOPF problem.

\subsection{Linearization approaches}
An alternative line of research to convex relaxations such as SOCP or SDP is the development of linear relaxations and linearization approaches for ACOPF. The motivation is that linear programs scale extremely well and can be directly embedded into MILP frameworks, although at the cost of reduced tightness compared to conic relaxations.

Early works by Bienstock and Muñoz~\cite{bienstock2014linear,bienstock2015approximate} introduced one of the first systematic polyhedral relaxations of ACOPF. Their approach operates in a lifted variable space and approximates the nonconvex quadratic and trigonometric relationships between voltages and power flows through a collection of convex inequalities. In particular, they propose $\Delta$-inequalities, loss inequalities, and circle inequalities.
The $\Delta$-inequalities bound the active power flow on each line in terms of voltage magnitude differences between connected buses, obtained by introducing lifted variables representing real and imaginary voltage components.
The loss inequalities approximate the quadratic expression for active-power losses along each branch by relaxing the corresponding equality into a conic inequality involving voltage-difference variables.
The circle inequalities describe the feasible $(P,Q)$ region of complex power flows as a circle in the plane, arising from the relationship between voltage magnitude, current, and apparent power. These can be written as rotated SOCP constraints but are further linearized using tangent planes, yielding a fully linear outer approximation of the AC feasible region.

In addition, in~\cite{bienstock2018lp} they proposed a theoretically grounded family of linear programs capable of approximating, to arbitrary accuracy, the solutions of nonlinear power system optimization problems, including those with integer variables. Their construction exploits network sparsity by leveraging the graph-theoretic treewidth: the size of the LP formulation grows linearly with the number of buses, logarithmically with the approximation precision, and exponentially with the treewidth.
This provides strong representability and convergence guarantees, showing that any OPF problem over a bounded-treewidth network can, in principle, be represented by a finite LP of controllable accuracy. However, the exponential dependence on treewidth constrains practical applicability to small or nearly radial networks, and empirical validation on realistic systems remains limited.

A further refinement was introduced by Bienstock and Villagra~\cite{BIENSTOCK}, in which they developed very tight and numerically stable linearly constrained relaxations. Their approach is based on a cutting-plane framework that iteratively approximates the conic constraints of the SOCP relaxation through dynamically generated linear cuts, while employing rigorous cut-management techniques to maintain numerical stability and control model size. The resulting method produces high-quality linear relaxations that can be constructed and solved efficiently with standard LP solvers, offering tight lower bounds at a fraction of the computational cost of nonlinear convex relaxations.  
The authors provide a theoretical justification for the effectiveness of these linear relaxations by showing that the active-power loss inequalities originally proposed in~\cite{bienstock2014linear} act as outer-cone envelope approximations of the SOCP constraints. They argue that such constraints are essential to achieving tightness, as they implicitly ensure that every unit of demand and loss is matched by a corresponding unit of generation, via a flow-decomposition argument.  
A key strength of their framework lies in its robustness and scalability. The cutting-plane algorithm features a warm-start mechanism that allows previously generated cuts to be reused when solving related OPF instances, such as those arising in multi-period formulations or perturbed system conditions. This warm-start capability yields significant computational savings and makes the method particularly well suited for time-series or multi-period ACOPF problems, where traditional nonlinear solvers struggle to converge.  
Extensive numerical experiments demonstrate that the proposed linear relaxations provide bounds comparable to or tighter than state-of-the-art nonlinear convex approaches, while solving significantly faster. Moreover, the authors highlight that the linearized relaxations, unlike SOCPs, offer stronger theoretical guarantees on bounding quality when paired with convex quadratic objectives.

Other approaches aim at directly linearizing the AC power-flow equations themselves. Coffrin and Van Hentenryck~\cite{coffrin2014linear} proposed the Linear Programming AC (LPAC) approximation, which extends the classical DC model by incorporating voltage magnitudes, reactive power, and active losses into a tractable linear framework. The LPAC formulation introduces linear approximations for both the sine and cosine terms of the AC power flow equations. In particular, the cosine function $\cos(\theta_i - \theta_{\bus})$ is represented by a polyhedral convex envelope built from a set of equally spaced tangent line segments, forming a piecewise-linear outer approximation of the trigonometric nonlinearity.  
Three variants are presented to suit different levels of available data. The hot-start LPAC assumes fixed voltage magnitudes and linearizes power flows around these known values. The warm-start LPAC introduces small voltage-magnitude deviations as decision variables and uses first-order Taylor expansions to model reactive power, allowing limited flexibility in voltages. The cold-start LPAC assumes nominal voltage magnitudes (1~p.u. at load buses) and fixed generator voltages, enabling initialization without a prior AC solution. Across all versions, the active and reactive power flows $(P_{ik}, Q_{ik})$ are modeled as affine functions of voltage magnitudes and phase angles, with a convex-envelope variable $\psi_{ik}$ representing the cosine term.  
By maximizing $\psi_{ik}$ within the convex envelope, the LPAC formulation minimizes relaxation error and yields a linear program that jointly represents real and reactive power, voltage magnitudes, and losses.

Fortenbacher and Demiray~\cite{fortenbacher2019linear} developed novel LP and QP formulations for the optimal power flow problem based on linearized power-flow equations and absolute-value loss approximations. Their methods link the full decision-variable domain to linear power flow models while explicitly representing active power losses through linear or piecewise-linear absolute-value terms.
Unlike earlier LP formulations that were restricted to radial or simplified networks, their approach is generalizable to arbitrary grid topologies and voltage levels, including transmission, distribution, and low-voltage systems.
A key feature of their formulation is that it can be solved in a single optimization run using standard LP/QP solvers, avoiding the iterative subproblem structure required in earlier works, and it addresses the issue of fictitious losses with a minimal set of constraints, without adding penalty terms to the objective.
Simulation studies demonstrate that the proposed LP/QP methods yield feasible AC solutions whenever approximate solutions exist, with significantly lower computational effort than full nonlinear ACOPF models and only moderate deviations in objective value for typical benchmark systems.

Sadat and Sahraei-Ardakani~\cite{sadat2022tuning} proposed a successive linear programming (SLP) algorithm for solving the ACOPF problem, building upon active-set optimization principles. Active-set methods identify and monitor only the most influential constraints at each iteration, thereby reducing search complexity compared to nonlinear interior-point solvers. Although more sensitive to initialization, this feature can be exploited effectively for time-series OPF studies, where good starting points are readily available.  
In their framework, the nonlinear ACOPF is iteratively linearized into an LP subproblem using first-order Taylor expansions of the nonlinear constraints and a piecewise-linear interpolation of the cost function. The resulting LP is solved repeatedly until violations in the original nonlinear equations fall within a specified tolerance. The approach offers several advantages: it can leverage fast, reliable commercial LP solvers; it easily integrates with existing energy management systems that prefer linear formulations; and it allows the use of DCOPF solutions as high-quality initializations, substantially reducing computational effort.  
A key innovation of their work lies in the introduction of tuning methods designed to ensure convergence of the linearized subproblems. These include the addition of random perturbations to generator cost functions to avoid degeneracy, the enforcement of convergence and wrap-around power-balance constraints, and the introduction of penalties to maintain voltage magnitudes near unity.

\section{Insights on Tightening the Jabr Relaxation}\label{sec:tightening}

In the previous section, we reviewed several methods proposed in the literature for efficiently implementing the Jabr-SOCP relaxation. However, as previously noted, even the Jabr equality ACOPF relaxation~\eqref{eq:jabr-equality} is not exact on meshed networks. This has motivated a large body of work on strengthening techniques that use reformulations, envelopes, or cut generation to reduce the relaxation gap.

Bynum et al.~\cite{bynum2018strengthened} proposed the use of McCormick envelopes for bilinear terms, combined with Optimization-Based Bound Tightening (OBBT), showing substantial reductions in relaxation gaps. More recent contributions include lifted nonlinear cuts (LNCs)~\cite{bugosen2024applications}, accuracy-enhancing cutting-plane methods~\cite{gholami2022accuracy}, and matrix minor reformulations~\cite{kocuk2018matrix}, all of which improve tightness while maintaining tractability. Guo, Nagarajan, and Bodur~\cite{guo2025tightening} further extend these ideas in the context of quadratic convex relaxations for transmission switching, highlighting the role of network topology in relaxation strength.

Another important line of research for strengthening SOCP relaxations is built on cycle-based constraints, which we explore in the next section.

\subsection{Cycle-based constraints}

As done in the proof of \Cref{Jabr equality model exactness}, given a solution to the Jabr relaxation we can always recover $|V_{\bus}|,\theta_{\lin}$ such that: $c_{\bus\bus} = |V_{\bus}|^2, \; c_{\lin} = |V_{\bus}||V_{\buss}|\cos{\theta_{\lin}} $ and $s_{\lin} = |V_{\bus}||V_{\buss}|\sin{\theta_{\lin}}$.
In general, the model is exact whenever at each node $\bus \in \buses$ we can define $\delta_{\bus}$ such that for all $(k,m) =l \in \lines$ we have: $\theta_{\lin} = \delta_{\bus} - \delta_{\buss} \mod 2\pi$. This is possible whenever the following cycle condition holds for every cycle $(k_1,...,k_n,k_{n+1}=k_1)$ in the graph: 
\begin{equation} \label{eq: cycle condition}
    \sum_{i=1}^{n}\theta_{k_{i+1}k_i}=0  \mod 2\pi.
\end{equation}
 Let  $\delta_h$ be defined as in the proof of Proposition~\ref{Jabr equality model exactness}. Then, if $(k_1=r,...,k_n=k)$ and $(h_1=r,...,h_n=k)$ are two paths from $r$ to $\bus$, we have
 \[
 \delta_h =  \sum_{i=1}^{n-1}\theta_{k_{i+1}k_{i}} =  \sum_{i=1}^{n-1}\theta_{h_{i+1}h_{i}} \mod 2\pi
 \]
 for the cycle condition. So $\delta_h$ is well defined and we obtain an optimal feasible solution for the Polar ACOPF model~\eqref{eq:ACOPF-polar}.

We can check that this condition holds directly on the variables $c_{\lin}$ and $s_{\lin}$ since $ \sum_{i=1}^{n} \theta_{k_{i+1}k_i} = 0 \mod 2\pi \iff \cos(\theta_{k_1k_n} + \sum_{i=1}^{n-1} \theta_{k_{i+1}k_i}) = 1$ and thanks to the following generalized sum-to-product formula.

\begin{proposition}[Generalized sum-to-product formula]
\label{sum-to-product}
    The following identities hold:
    \begin{align}
        \label{sin1}
        \sin\left(\sum_{i=1}^n\theta_i\right) &=\sum_{\ell = 0}^{\lfloor n/2 \rfloor}\sum_{\substack{A \subset [n]\\|A|=2\ell+1}}(-1)^\bus\prod_{h \in A}\sin(\theta_h)\prod_{h \in A^c}\cos(\theta_h), \\
        \label{cos1}
        \cos\left(\sum_{i=1}^n\theta_i\right) &=\sum_{\ell = 0}^{\lfloor n/2 \rfloor}\sum_{\substack{A \subset [n]\\|A|=2\ell}}(-1)^\bus\prod_{h \in A}\sin(\theta_h)\prod_{h \in A^c}\cos(\theta_h).
    \end{align}
\end{proposition}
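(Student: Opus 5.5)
The cleanest route is through Euler's formula: both identities \eqref{sin1} and \eqref{cos1} come out at once as the imaginary and real parts of a single product expansion. Before starting, I note that the sign in the statement should be read as $(-1)^\ell$, where $\ell$ is the summation index with $|A| = 2\ell$ or $|A| = 2\ell+1$. The symbol $\bus$ appearing in the exponent is evidently a typo, since no bus index occurs in the identity. I will prove the identities with $(-1)^\ell$.

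\textbf{Step 1.} Write
\[
e^{\mathrm{i}\sum_{i=1}^n\theta_i}=\prod_{h=1}^n e^{\mathrm{i}\theta_h}=\prod_{h=1}^n\bigl(\cos(\theta_h)+\mathrm{i}\sin(\theta_h)\bigr).
\]

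\textbf{Step 2.} Expand the product by distributivity. Each term corresponds to choosing, for every factor $h\in[n]$, either $\cos(\theta_h)$ or $\mathrm{i}\sin(\theta_h)$. Letting $A\subseteq[n]$ be the set of indices where the sine is chosen, this gives
\[
\prod_{h=1}^n\bigl(\cos(\theta_h)+\mathrm{i}\sin(\theta_h)\bigr)=\sum_{A\subseteq[n]}\mathrm{i}^{|A|}\prod_{h\in A}\sin(\theta_h)\prod_{h\in A^c}\cos(\theta_h).
\]
This expansion is the only step requiring any care, and it is a routine combinatorial fact that can be verified by induction on $n$ if desired.

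\textbf{Step 3.} Group the subsets by the parity of $|A|$.
\begin{itemize}
\item If $|A|=2\ell$, then $\mathrm{i}^{|A|}=(-1)^\ell$, which is real.
\item If $|A|=2\ell+1$, then $\mathrm{i}^{|A|}=\mathrm{i}(-1)^\ell$, which is purely imaginary.
\end{itemize}
Since all the $\theta_h$ are real, every product of sines and cosines is real.

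\textbf{Step 4.} Take real parts on both sides. The left-hand side gives $\cos(\sum_i\theta_i)$ and the right-hand side gives the even-$|A|$ sum, which is \eqref{cos1}. Taking imaginary parts gives $\sin(\sum_i\theta_i)$ on the left and the odd-$|A|$ sum on the right, which is \eqref{sin1}.

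In both identities $\ell$ runs from $0$ to $\lfloor n/2\rfloor$. For odd cardinalities, any value of $\ell$ with $2\ell+1>n$ indexes an empty family of subsets and contributes zero, so the stated range is harmless.

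There is no genuine obstacle here. The only points requiring attention are the bookkeeping of the powers of $\mathrm{i}$ and the interpretation of the sign exponent noted at the start.

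An alternative, fully real-variable argument is induction on $n$ using the two-angle formulas
\[
\sin(\alpha+\theta_{n+1})=\sin\alpha\cos\theta_{n+1}+\cos\alpha\sin\theta_{n+1},\qquad \cos(\alpha+\theta_{n+1})=\cos\alpha\cos\theta_{n+1}-\sin\alpha\sin\theta_{n+1}.
\]
A subset $A\subseteq[n+1]$ either omits $n+1$, inheriting its sign from the case $n$, or contains it. In the cosine case, adding $n+1$ to an odd set of size $2\ell-1$ produces the extra minus sign that turns $(-1)^{\ell-1}$ into $(-1)^\ell$. I would mention this induction only as a cross-check, since the complex-exponential argument is shorter.
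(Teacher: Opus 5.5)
Your proposal is correct and is essentially the paper's argument: write $e^{\mathrm{i}\sum_h\theta_h}=\prod_h(\cos\theta_h+\mathrm{i}\sin\theta_h)$, expand over subsets $A\subseteq[n]$ with weight $\mathrm{i}^{|A|}$, and take imaginary and real parts. You are also right to read the sign as $(-1)^\ell$, and your write-up is slightly more complete than the paper's, which carries out only the sine identity and leaves the cosine (real-part) case implicit.
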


\begin{proof}
Let $\Im(x)$ denote the imaginary part of $x \in \mathbb{C}$. Using Euler's identity and since $\Im(i^{\ell})$ is zero when $\bus$ is even, we obtain:
\begin{align*}
\sin\left(\sum_{j=1}^n\theta_j\right) &= \Im(\exp(i\sum_{j=1}^n\theta_j)) \\
&= \Im(\prod_{j=1}^n\exp(i\theta_j)) \\
&= \Im(\prod_{j=1}^n(\cos(\theta_j)+i\sin(\theta_j))) \\
&= \Im(\sum_{A \subset [n]} \prod_{h \in A}(i\sin(\theta_h))\prod_{h \in A^c}(\cos(\theta_h))) \\
&= \Im(\sum_{A \subset [n]} i^{|A|} \prod_{h \in A}(\sin(\theta_h))\prod_{h \in A^c}(\cos(\theta_h))) \\
&= \sum_{\ell = 0}^{\lfloor n/2 \rfloor}\sum_{\substack{A \subset [n]\\|A|=2\ell+1}}(-1)^\bus\prod_{h \in A}\sin(\theta_h)\prod_{h \in A^c}\cos(\theta_h).
\end{align*}

\end{proof}

We note that for the constraint \eqref{Voltage_Limit}, the voltage $|V_{\bus}|$ is nonzero at each bus $\bus\in \buses$. By multiplying \eqref{cos1}, applied to $\sum_{i=1}^{n}\theta_{k_{i+1}k_i}$, by $\prod_{h=1}^n|V_{k_h}|^2$ and using the definition of the variables $c$ and $s$ we obtain the following observation.

\begin{observation}[cycle constraint]\label{obs: cycle constraint}
    Let $(k_1,...,k_n,k_1)$ be a cycle in the graph. Then $\sum_{i=1}^{n}\theta_{k_{i+1}k_i}=0\mod 2\pi$, where $n+1=1$, if the voltages $|V_i|\neq 0$ for each bus and the following equation holds:
    \begin{equation}\label{eq: cycle constraint}
        \sum_{k = 1}^{\lfloor n/2 \rfloor}\sum_{\substack{A \subset [n]\\|A|=2k}}(-1)^\bus\prod_{h \in A}s_{k_hk_{h+1}}\prod_{h \in A^c}c_{k_hk_{h+1}}=\prod_{k=1}^nc_{k_i,k_i}.
    \end{equation}
\end{observation}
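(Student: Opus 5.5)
Set $\Theta := \sum_{i=1}^{n}\theta_{k_{i+1}k_i}$ with $k_{n+1}=k_1$. The argument has three steps: show that the left-hand side of \eqref{eq: cycle constraint} equals $\prod_{h}|V_{k_h}|^2\cos(\Theta)$, compare with the right-hand side, and divide.

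\textbf{Step 1 (expansion).} As in the proof of \Cref{Jabr equality model exactness}, any feasible point of the Jabr equality relaxation gives magnitudes $|V_{\bus}|=\sqrt{c_{\bus\bus}}$ and angles $\theta_{\lin}$ with
\[
c_{\lin}=|V_{\bus}||V_{\buss}|\cos\theta_{\lin},\qquad s_{\lin}=|V_{\bus}||V_{\buss}|\sin\theta_{\lin}.
\]
Because $c$ is symmetric and $s$ antisymmetric, we may choose $\theta_{\rlin}=-\theta_{\lin}$. Then $c_{k_hk_{h+1}}$ and $s_{k_hk_{h+1}}$ are $|V_{k_h}||V_{k_{h+1}}|$ times the cosine and sine of $\theta_{k_hk_{h+1}}$. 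Summing $\theta_{k_hk_{h+1}}$ over the cycle gives $-\Theta$. I will apply \eqref{cos1} of \Cref{sum-to-product} with angles $\theta_h:=\theta_{k_hk_{h+1}}$. Since cosine is even, the sign convention is irrelevant: $\cos(-\Theta)=\cos\Theta$.

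\textbf{Step 2 (the only real bookkeeping).} Multiply \eqref{cos1} by $\prod_{h=1}^n|V_{k_h}|^2$. Each bus of the cycle lies on exactly two consecutive edges, $(k_{h-1},k_h)$ and $(k_h,k_{h+1})$. So in every monomial, indexed by a set $A$ of even size $2\ell$, this product distributes exactly one factor $|V_{k_h}||V_{k_{h+1}}|$ to each edge. Each edge factor turns into $s_{k_hk_{h+1}}$ for $h\in A$ and into $c_{k_hk_{h+1}}$ for $h\in A^c$. The signs $(-1)^{\ell}$ carry over unchanged; this is the factor written as $(-1)^{\bus}$ in the statement, with summation index $\bus=\ell$.

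The $\ell=0$ term of \eqref{cos1} is $\prod_h c_{k_hk_{h+1}}$. Assuming the statement intends the left-hand sum to include $\ell=0$ (otherwise the identity below fails), the left-hand side of \eqref{eq: cycle constraint} equals
\[
\prod_{h=1}^n|V_{k_h}|^2\cos(\Theta).
\]
The right-hand side is $\prod_h c_{k_hk_h}=\prod_h|V_{k_h}|^2$.

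\textbf{Step 3 (conclusion).} Since $|V_i|\neq 0$ (guaranteed by \eqref{Voltage_Limit}), the product of squared magnitudes is positive. Dividing by it, \eqref{eq: cycle constraint} becomes $\cos\Theta=1$. This is equivalent to $\Theta\equiv 0 \mod 2\pi$, which is the cycle condition \eqref{eq: cycle condition} for this cycle. The argument in fact gives equivalence, not just the stated implication.

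The main obstacle is Step 2: making sure the voltage factors split uniformly across the cycle edges, and reconciling the indexing and summation-range conventions of \eqref{eq: cycle constraint} with those of \eqref{cos1}. The remaining steps are direct consequences of \Cref{sum-to-product} and of the recovery of $|V_{\bus}|$ and $\theta_{\lin}$ already used in \Cref{Jabr equality model exactness}.
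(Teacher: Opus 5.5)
Your proof is correct and follows the same route as the paper. Expand the cosine of the cycle angle sum via \eqref{cos1}, multiply by $\prod_h |V_{k_h}|^2$ to turn each trigonometric factor into the corresponding $c$ or $s$ variable, and use $\cos\Theta = 1 \iff \Theta \equiv 0 \bmod 2\pi$. You are also right that the sum in \eqref{eq: cycle constraint} must include the $\ell=0$ term $\prod_h c_{k_hk_{h+1}}$ (as \eqref{eq: 3 cycle constraint} does) for the statement to hold, and your treatment of edge orientation via $\theta_{\rlin}=-\theta_{\lin}$ makes explicit a detail the paper leaves implicit.
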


Thus, if the cycle constraint holds for every cycle in the network for optimal solution of the Jabr equality Equation \eqref{eq:jabr-equality}, then it is an optimal solution to the OPF problem Equation $\eqref{eq:ACOPF-polar}$. However, in real-life applications, the number of cycles in a graph can be very large for large networks, making this result impractical; it is sufficient instead to enforce it on the cycles of a cycle basis of the graph~\cite{schrijver-book}.

Various works have developed efficient methods to implement cycle-based constraints. Kocuk, Dey, and Sun~\cite{cycle_Constr} were among the first to demonstrate the effectiveness of such constraints, showing that they substantially reduce the SOCP–SDP gap on meshed networks. Their work was later extended to mixed-integer settings~\cite{kocuk2017new}, proposing strong MISOCP relaxations for optimal transmission switching problems. Guo, Nagarajan, and Bodur~\cite{guo2025tightening} incorporated cycle-based polynomial constraints into quadratic convex relaxations, further improving bounds through disjunctive cutting-plane methods. Related techniques, such as SDP-inspired affine inequalities~\cite{miao2017least} and polynomial cuts~\cite{hijazi2016polynomial}, are based on the idea of reinforcing cycle feasibility. 

An alternative approach to enforce cycle-based constraints is proposed by  Shao et al.~\cite{Shao2025}, who develop an equivalent reformulation of the Jabr--SOCP relaxation by expressing phase angle differences through inverse trigonometric functions. This reformulation makes cycle consistency implicit: Kirchhoff's voltage law is enforced by requiring that the sum of angle differences along each cycle in a cycle basis vanishes. Building on this, they introduce a new SOCP relaxation combined with reverse cone cuts applied adaptively to a small subset of edges, and propose a custom OBBT scheme achieving tight optimality gaps.

Together, these works establish cycle-based constraints as one of the most powerful strategies for tightening SOCP relaxations, often approaching the strength of SDP while retaining scalability.

\subsection{Decomposition of cycle constraint into 3- and 4-cycle constraints}
Modeling the convex relaxation of the cycle constraint \eqref{eq: cycle constraint} would mean adding an exponential number of constraints (resp. variables) for the primal (dual) formulation with respect to the length of the cycle. For this reason in \cite{cycle_Constr}, the authors decomposed the single cycle constraints in multiple cycle constraints on smaller cycles as follows:

Consider the circuit
\[
\circuit = (\vertex_1,\arc_1,\vertex_2,\arc_2,\dots,\arc_{\sizecircuit},\vertex_{\sizecircuit+1}=\vertex_1).
\]
We obtain a decomposition of \(\circuit\) by partitioning its arcs into elements that are either single 1-paths (single arcs) or \(2\)-paths.
For a given element \(\Path\) of the partition, with initial and terminal vertices \(\ivertex\) and \(\evertex\), we define the auxiliary cycle \(\cycle_{\Path}\) as the concatenation of:
\begin{enumerate}
  \item the auxiliary arc \(\aarc_{\ivertex} = (\vertex_1, \ivertex)\),
  \item the element \(\Path\),
  \item the auxiliary arc \(\aarc_{\evertex} = (\evertex, \vertex_1)\).
\end{enumerate}

The resulting cycle \(\cycle_{\Path}\) is a 3-cycle if \(\Path\) consists of a single arc, and a 4-cycle if \(\Path\) is a 2-path. See \Cref{fig:decomposition} for an example. The sum of angle differences in \eqref{eq: cycle condition} over the original circuit \(\cycle\) is zero if and only if the sum of angle differences is zero over each auxiliary cycle \(\cycle_{\Path}\) associated with the elements \(\Path\) of the partition. Hence, it suffices to consider cycle constraints over 3- and 4-cycles.

For instance, consider the 3-cycle \(\cycle = (1,(1,2),2,(2,3),3,(3,1),1)\). The cycle constraint \eqref{eq: cycle constraint} applied to \(\cycle\) is:
\begin{equation}\label{eq: 3 cycle constraint}
    p_3 \coloneqq c_{12}(c_{23}c_{31} - s_{23}s_{31}) - s_{12}(s_{23}c_{31} + c_{23}s_{31}) - c_{11}c_{22}c_{33} = 0.
\end{equation}

Similarly, for the 4-cycle \(\cycle = (1,(1,2),2,(2,3),3,(3,4),4,(4,1))\), the cycle constraint becomes:
\begin{equation}\label{eq: 4 cycle constraint}
\begin{aligned}
    p_4 \coloneqq &(c_{12}c_{34} - s_{12}s_{34})(c_{23}c_{41} - s_{23}s_{41})
    \\& - (s_{12}c_{34} + c_{12}s_{34})(s_{23}c_{41} + c_{23}s_{41})
    - c_{11}c_{22}c_{33}c_{44} = 0.
\end{aligned}
\end{equation}

For both 3- and 4-cycles, the cycle constraint can be equivalently expressed as a system of two multilinear equations, each of degree two.  
Consider the 4-cycle case: equation \eqref{eq: cycle condition} can be rewritten as
\[
\theta_{12} + \theta_{34} = -(\theta_{23} + \theta_{41}) \mod 2\pi,
\]
which is equivalent to the following system:
\begin{subequations}\label{eq: splitted cycle contraint}
    \begin{align}
    \sin(\theta_{12} + \theta_{34}) &= \sin(-\theta_{23} - \theta_{41}), \\
    \cos(\theta_{12} + \theta_{34}) &= \cos(-\theta_{23} - \theta_{41}).
\end{align}
\end{subequations}
Expanding using the sum-to-product formulas and multiplying both equations by \(\prod_{i=1}^{4}\abs{\V{i}}\), then substituting the cosine and sine terms with \(c_{ij}\) and \(s_{ij}\), we obtain:
\begin{subequations}\label{eq: 4 cycle constr var}
    \begin{align}
    q^4_1 &\coloneqq s_{12}c_{34} + c_{12}s_{34} + s_{23}c_{41} + c_{23}s_{41} = 0, \\
    q^4_2 &\coloneqq  c_{12}c_{34} - s_{12}s_{34} - c_{23}c_{41} + s_{23}s_{41}=0.
\end{align}
\end{subequations}

Analogously, for the 3-cycle constraint \eqref{eq: 3 cycle constraint}, we can rewrite it as:
\begin{subequations}\label{eq: 3 cycle constr var}
    \begin{align}
    q^1_{3} &= s_{12}c_{33} + c_{23}s_{31} + s_{23}c_{31} \\
q^2_{3} &= c_{12}c_{33} - c_{23}c_{31} + s_{23}s_{31}.
\end{align}
\end{subequations}
These relations comprise bilinear functions which satisfy the assumption of Proposition~\ref{prop: multilin convex hull}, for which the convex hull can be obtained by introducing auxiliary variables for each bilinear term and enforcing the corresponding McCormick envelopes. This is precisely the strategy employed in~\cite{cycle_Constr}.

In~\cite{guo2025tightening}, a different approach is used. Instead of substituting the terms \(\barra{\V{i}}\barra \barra{\V{j}}\barra \cos(\delta_{ij})\) with variables \(c_{ij}\), the authors introduce a single variable representing \(\cos(\theta_{ij})\) and derive 3- and 4-cycle constraints analogous to \eqref{eq: 3 cycle constr var} and \eqref{eq: 4 cycle constr var} using sum-to-product formulas.  

More generally, several equivalent formulations of cycle constraints are possible, only a subset of which is reported here. Additional equivalent reformulations can be obtained through different decompositions of the cycles in the cycle basis, alternative choices of the cycle basis itself, or permutations of the angle variables in \eqref{eq: splitted cycle contraint}. In~\cite{guo2025tightening}, multiple relaxations corresponding to such equivalent reformulations of the cycle constraints are implemented simultaneously in the model. This strategy leads to tighter bounds than applying the relaxation to a single formulation of the cycle constraints. Furthermore, instead of applying McCormick relaxations to each bilinear term individually, they employ the dual formulation of the convex envelope, which provides tighter bounds. These outcomes are not straightforward and merit further examination. To understand this question better, we provide a small review of results on the convexification of multilinear functions.

To conclude this section, Table \ref{tab:method-comparison} summarizes and compares the various methods employed in the literature leveraging cycle constraints to solve ACOPF. ``Primal'' refers to McCormick-type linearization of each bilinear term separately; ``Dual'' refers to the extended-formulation representation of the convex hull via vertex weights $\lambda_v$. \emph{Cycle form} indicates how the cycle constraints are encoded before relaxation. In Table \ref{tab:numerical-comparison} each entry is the gap reported after applying the cycle-based strengthening in the root node (no spatial branching). ``--'' indicates the case was not reported in that paper.



\begin{table}[ht!]
\centering
\caption{Methodological comparison of cycle-based strengthening approaches
         for the Jabr--SOCP relaxation of ACOPF.
         }
\label{tab:method-comparison}
\setlength{\tabcolsep}{4pt}
\renewcommand{\arraystretch}{3}
\resizebox{\textwidth}{!}{%
\begin{tabular}{@{}lllllll@{}}
\toprule
\textbf{Reference}
  & \textbf{Base relax.}
  & \textbf{Cycle form}
  & \textbf{Convexification}
  & \textbf{P/D}
  & \textbf{Cycle basis}
  & \textbf{Int.\ ext.} \\
\midrule

\makecell[l]{Kocuk et al.~\cite{cycle_Constr} \\ (2016)}
  & SOCP
  & \makecell[l]{3- \& 4-cycle bilinear \\ eqs.\ \eqref{eq: 3 cycle constr var}--\eqref{eq: 4 cycle constr var}}
  & McCormick envelopes
  & P
  & \makecell[l]{Yes \\ (fundamental)}
  & No \\[2pt]

\makecell[l]{Kocuk et al.~\cite{kocuk2018matrix} \\ (2018)}
  & SOCP
  & \makecell[l]{Matrix minor \\ constraints}
  & \makecell[l]{McCormick + \\ reverse-cone cuts}
  & P
  & Yes
  & No \\[2pt]

\makecell[l]{Kocuk~\cite{kocuk2017new} \\ (2017)}
  & MISOCP
  & \makecell[l]{3- \& 4-cycle \\ bilinear eqs.}
  & McCormick envelopes
  & P
  & Yes
  & \makecell[l]{Yes \\ (OTS)} \\[2pt]

\makecell[l]{Bynum et al.~\cite{bynum2018strengthened} \\ (2018)}
  & SOCP
  & \makecell[l]{Rectangular form +\\ ref.\ bus cuts}
  & McCormick + OBBT
  & P
  & Implicit
  & No \\[2pt]

\makecell[l]{Hijazi et al.~\cite{hijazi2016polynomial} \\ (2016)}
  & SDP / SOCP
  & Polynomial SDP cuts
  & \makecell[l]{SDP lifting + \\ linearization}
  & P
  & Partial
  & No \\[2pt]

\makecell[l]{Miao et al.~\cite{miao2017least} \\ (2017)}
  & SOCP
  & \makecell[l]{SDP-inspired \\ affine cuts}
  & \makecell[l]{Affine outer \\ approximation}
  & P
  & Partial
  & No \\[2pt]

\makecell[l]{Guo et al.~\cite{guo2025tightening} \\ (2025)}
  & MISOCP
  & \makecell[l]{Multiple equiv.\ \\ bilinear reformulations}
  & \makecell[l]{Dual convex hull \\ (Benders cuts)}
  & D
  & Yes
  & \makecell[l]{Yes \\ (OTS)} \\[2pt]

\makecell[l]{Shao et al.~\cite{Shao2025} \\ (2025)}
  & SOCP
  & \makecell[l]{Arctan / angle-sum \\ (cycle basis)}
  & \makecell[l]{McCormick + \\ reverse-cone + OBBT}
  & P
  & \makecell[l]{Yes \\ (exact reform.)}
  & No \\

\bottomrule
\end{tabular}
}
\smallskip

\noindent\footnotesize
\textbf{P/D}: Primal (McCormick-type) or Dual (extended formulation) convexification.
\textbf{Int.\ ext.}: Integer extension (OTS = optimal transmission switching).
\end{table}

\begin{table}[ht!]

\centering
\caption{Comparison of optimality gaps (\%) on common PGLIB/MATPOWER benchmark instances.}
\label{tab:numerical-comparison}
\scriptsize
\setlength{\tabcolsep}{5pt}
\renewcommand{\arraystretch}{1.3}
\begin{tabular}{@{}lrrrrr@{}}
\toprule
\textbf{Case}
  & \makecell{\textbf{Kocuk}\\\textbf{et al.~\cite{cycle_Constr}}\\\textbf{(2016)}}
  
  & \makecell{\textbf{Bynum}\\\textbf{et al.~\cite{bynum2018strengthened}}\\\textbf{(2018)}}
  & \makecell{\textbf{Guo}\\\textbf{et al.~\cite{guo2025tightening}}\\\textbf{(2025)}}
  & \makecell{\textbf{Shao}\\\textbf{et al.~\cite{Shao2025}}\\\textbf{(2025)}} \\
\midrule
case9   & 0.00  & 0.00 & 0.1 & --   \\
 case14  & 0.06  & 0.06 & -- & 0.00 \\
case30  & 0.34  & 0.08 & 11.0  & 0.07 \\
case57  & 0.01  & 0.08 & 0.1   & 0.06 \\
case118 & 0.16  & 0.06 & 0.3 & 0.07 \\
case300 & 0.11  & -- & 0.7   & 0.04 \\
\bottomrule
\end{tabular}
\smallskip

\noindent\footnotesize
Cases from the PGLIB v23 benchmark library~\cite{PGLIB} and
the MATPOWER test suite~\cite{zimmerman2011matpower}.
Gaps for Kocuk et al.~(2016) are taken from the strong SOCP
column.
Gaps for Bynum et al.~(2018) correspond to the RMro (SOCP + McCormick)
method after OBBT convergence.
Gaps for Shao et al.~(2025) correspond to Algorithm~1 at termination.
\end{table}




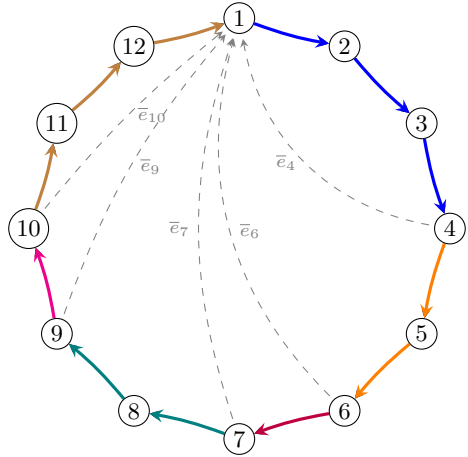
\begin{figure}[ht!]
    \centering
    
\begin{tikzpicture}[scale=2, every node/.style={font=\small},>=stealth]

  \foreach \i in {1,...,12} {
    \node[circle, draw, inner sep=1.5pt] (v\i) at ({90-30*(\i-1)}:1.4) {\i};
  }

  \foreach \i/\j in {1/2,2/3,3/4,4/5,5/6,6/7,7/8,8/9,9/10,10/11,11/12,12/1} {
    \draw[->] (v\i) to[bend right=5] (v\j);
  }


  \draw[line width=1.2pt, blue, ->] (v1) to[bend right=5] (v2) node[midway, above right, font=\scriptsize] {};

  \draw[line width=1.2pt, blue, ->] (v2) to[bend right=5] (v3);
  \draw[line width=1.2pt, blue, ->] (v3) to[bend right=5] (v4);

  \draw[line width=1.2pt, orange, ->] (v4) to[bend right=5] (v5);
  \draw[line width=1.2pt, orange, ->] (v5) to[bend right=5] (v6);

  \draw[line width=1.2pt, purple, ->] (v6) to[bend right=5] (v7) node[midway, right, font=\scriptsize] {};

  \draw[line width=1.2pt, teal, ->] (v7) to[bend right=5] (v8);
  \draw[line width=1.2pt, teal, ->] (v8) to[bend right=5] (v9);

  \draw[line width=1.2pt, magenta, ->] (v9) to[bend right=5] (v10) node[midway, below right, font=\scriptsize] {};

  \draw[line width=1.2pt, brown, ->] (v10) to[bend right=5] (v11);
  \draw[line width=1.2pt, brown, ->] (v11) to[bend right=5] (v12);

  \draw[line width=1.2pt, brown, ->] (v12) to[bend right=5] (v1) node[midway, left, font=\scriptsize] {};

  \tikzset{aux/.style={->, dashed, gray, shorten >=2pt, shorten <=2pt}}

  \draw[aux] (v4) to[bend left=35] node[midway, left, font=\scriptsize] {$\aarc_{4}$} (v1);
  \draw[aux] (v6) to[bend left=30] node[midway, right, font=\scriptsize] {$\aarc_{6}$} (v1);
  \draw[aux] (v7) to[bend left=18] node[midway, left, font=\scriptsize] {$\aarc_{7}$} (v1);
  \draw[aux] (v9) to[bend left=10] node[midway, right, font=\scriptsize] {$\aarc_{9}$} (v1);
  \draw[aux] (v10) to[bend left=6] node[midway, right, font=\scriptsize] {$\aarc_{10}$} (v1);

\end{tikzpicture}

    \caption{Decomposition of a cycle into 3- and 4- cycles.}
    \label{fig:decomposition}
\end{figure}

\subsection{Handling multilinear terms}

\nc{\multilin}{\Phi}
\nc{\monomindex}{t}
\nc{\monomfamily}{T}
\nc{\monomcoef}{a_t}
\nc{\monomset}{J_t}
\nc{\monomsett}{J_{t'}}
\nc{\xv}{x}
\nc{\phigraph}{\Gamma_{\multilin}}
\nc{\convgraph}{\ConvexHull{\Gamma_{\multilin}}}
The cycle constraint \eqref{eq: cycle constraint} is a multilinear constraint, meaning that in each monomial of the sum, every variable appears at most once, and all monomials share the same degree. This class of constraints has been studied through various convex relaxations of equivalent formulations.


Consider a general multilinear function \(\multilin : [\lowb, \upb] \to \bR\), defined as
\begin{equation*}
    \multilin(x) = \sum_{\monomindex \in \monomfamily} \monomcoef \prod_{j \in \monomset} x_j,
\end{equation*}
where \(\lowb, \upb \in \bR^N\) and \([\lowb, \upb] \coloneqq \{\xv \in \bR^N \mid \lowb \leq \xv \leq \upb\}\). 

In~\cite{cycle_Constr, guo2025tightening}, whenever a multilinear function \(\multilin\) appears in a constraint of the considered ACOPF formulation with cycle constraints, it is relaxed by introducing an auxiliary variable \(z\) in place of \(\multilin(x)\) and imposing that \((x,z)\) is contained in the \emph{multilinear polytope} \(\convgraph\), defined as the convex envelope of the set
\begin{equation*}
    \phigraph \coloneqq \{(x,z) \in [\lowb, \upb] \times \bR \mid z = \multilin(x)\}.
\end{equation*}
This approach leads to a linear relaxation since, in~\cite{Rikun1997}, Rikun proved that the convex hull of multilinear functions over rectangular domains is polyhedral. Furthermore, they introduced a class of tight linear inequalities, referred to as associated affine cuts. Nonetheless, no general conditions are known for when these cuts are facet-defining, and the full set of such cuts remains unknown for general multilinear functions. However, a dual formulation of the multilinear polytope can always be computed~\cite{Bao2009}, even though the number of variables required scales exponentially with \(N\).  Meyer~\cite{Meyer2005} showed that the multilinear polytope is uniquely defined by a specific triangulation of the domain \([\lowb, \upb]\), and that the number of facet-defining cuts is lower bounded by the cardinality of a minimal triangulation of this hyperrectangle, which also grows exponentially with \(N\). For this reason, rather than computing the convex envelope of \(\multilin\) directly, convex relaxations are often applied to individual monomials \(\prod_{j \in \monomset} x_j\). 
\nc{\zerov}{\mathbf{0}}
\nc{\onev}{\mathbf{1}}
Theoretical results on the facial structure of the bilinear polytope in the binary case, where \(\lowb = \zerov\) and \(\upb = \onev\), have attracted considerable attention in the literature due to their computational advantages for MINLP. In particular, foundational results by Padberg~\cite{Padberg1989} on the bilinear case have significantly improved the performance of mixed-integer quadratic constrained programming (MIQCP) solvers~\cite{Bao2009, Bonami2018}.
While the multilinear polytope for higher-degree multilinear functions has been extensively studied in the series of works by Del Pia and Khajavirad~\cite{delpia1, delpia2, delpia3, delpia4}, where compact facet-defining formulations are derived for several classes of functions, the non-binary case remains considerably more challenging to address: McCormick~\cite{McCormick1976} and Al-Khayyal and Falk~\cite{AlKhayyal1983} derived the convex envelope of individual bilinear terms over rectangular domains. Meyer and Floudas~\cite{trilinear_convex_envelope} extended these results to trilinear monomials, providing explicit envelopes. However, these constructions are highly intricate, involving 15 separate cases based on variable bounds and requiring nontrivial floating-point operations. For higher-order monomials, approximations are often constructed by recursively applying the bilinear and trilinear envelopes~\cite{AlKhayyal1983, Maranas1995, quadrilinear_approximation}. These approximations yield the exact convex envelope only under restrictive assumptions, such as when \(\lowb = -\upb\), as shown by Leudke et al.~\cite{leudke2012}, who also provide tight bounds on the relaxation gap between the recursive McCormick construction and the true convex hull.

Finally, Costa and Liberti~\cite{DualIsBetterThanPrimal} argue that dual representations of monomial envelopes are more compact and scalable to arbitrary \(N\) compared to their primal counterparts, making them more suitable for practical implementation. To reduce model complexity, these dual formulations can be applied iteratively to generate facet-defining cuts that separate a given point from the convex envelope of the multilinear function~\cite{Bao2009}, preserving the structure of the original problem instead of enforcing a full monolithic formulation. This approach is used, for example, in~\cite{guo2025tightening} to strengthen quadratic convex relaxations for the AC optimal transmission switching problem.

Since the number of variables introduced by the dual formulation grows exponentially with \(N\), it is often computationally advantageous to approximate the convex hull of a multilinear function by computing the convex hull of each monomial separately. This naturally raises the following question: under what conditions does the sum of the convex hulls of the individual monomials coincide with the convex hull of the full multilinear function? Rikun~\cite{Rikun1997} showed that this holds whenever \(\monomset \cap \monomsett = \{\bus\}\) for some fixed \(\bus\), or when the variable sets of all monomials are pairwise disjoint. Meyer~\cite{Meyer2005} extended these results, showing that the equivalence still holds if the triangulations inducing the convex envelopes of the individual monomials project to the same triangulation in the shared variable space. Leudke et al.~\cite{leudke2012} further demonstrated that the sum of monomial convex hulls equals the convex hull of \(\multilin\) whenever \(\lowb \geq 0\).

Finally, we address the following issue: can tighter convex relaxations of multilinear constraints be constructed, at least in principle? For clarity of exposition, we focus on the more specific setting of multilinear equality constraints of the form \(\multilin(x)=0\). Most existing approaches model such constraints by first constructing a convex relaxation of the graph of \(\multilin\), and then enforcing the equality by intersecting this relaxation with the hyperplane \(\{z=0\}\). This modeling strategy is natural, as \(\convgraph\) represents the tightest possible convex relaxation of the graph of a multilinear function. 

However, the convex set of primary interest in this context is not the graph itself, but rather the convex hull of the solution set of the equality constraint \(\multilin(x)=0\). These two constructions need not coincide in general. In particular, intersecting a convex relaxation of the graph with the hyperplane \(\{z=0\}\) may yield a relaxation that is weaker than the convex hull of the feasible set defined directly by the equality constraint, since: \(\{(x,0) \in [\lowb, \upb] \times \bR \mid \multilin(x)=0\}
\;\subset\;
\convgraph \cap \{z=0\}\).
To the best of our knowledge, this distinction has not been explicitly addressed in the literature. Implicitly, it is often assumed that no tighter relaxation can be obtained, since \(\convgraph\) is already the convex hull of the graph of \(\multilin\). Nevertheless, when the objective is to model the feasible region induced by the constraint \(\multilin(x)=0\), a tighter convex relaxation may in fact be available. This is the reason why the various linearizations proposed in~\cite{guo2025tightening}, obtained from equivalent formulations of the ACOPF with cycle constraints, are not equivalent after relaxation, and why imposing that the solution lies in their intersection yields tighter bounds. This motivates a future direction for ACOPF: the study of the convex relaxation of the solution space of the cycle constraints.



\subsection{Convex hull through the dual}
In this subsection we clarify the notion of dual representation. Consider the multilinear monomial:
\begin{equation*}
    \monom(x) \coloneqq \prod_{i=1}^kx_i
\end{equation*}
We let \(\hyperC\coloneqq \prod_{i=1}^k [a_i,b_i]\) be the domain of \(w\), and \(V(\hyperC)\) its set of vertices (note that every vertex \(x \in V(P)\) has as its i-th coordinate, either \(a_i\) or \(b_i\)). Let \(\Gamma_{\monom} = \{(x,w) \in \hyperC \times \bR \mid \monom(x) = w,\; x \in \hyperC \}\) be the graph of \(\monom\) over \(\hyperC\). It is known~\cite{Rikun1997}, that its convex hull \(\ConvexHull{\Gamma_{\monom}}\) is the convexification of the set \(\{(x,\monom(x)) \mid x \in V(\hyperC)\}\). That is,
\begin{equation*}
\ConvexHull{\Gamma_{\monom}} =
\left\{
(x,w) \;\middle|\;
\begin{aligned}
& \exists \lambda_v \ge 0 \quad && \forall v \in V(\hyperC), \\
& \sum_{v \in V(\hyperC)} \lambda_v = 1, \\
& (x,w) = \sum_{v \in V(\hyperC)} \lambda_v \bigl(v, \monom(v)\bigr)
\end{aligned}
\right\}.
\end{equation*}
This means that introducing the variables \(\lambda_v\) for all \(v \in \hyperC\), \(x_i\) for \(i=1,\ldots,\bus\) and \(w\) as done in~\cite{DualIsBetterThanPrimal}, we can rewrite the convex hull of \(\monom\) as a linear formulation as follows:
\begin{subequations}
    \begin{align}\label{eq: dual formulation}
    w &= \sum_{v \in V(\hyperC)} \lambda_v \monom(v) \\
    x_i &= \sum_{v \in V(\hyperC)} \lambda_v v_i \\
    \sum_{v \in V(\hyperC)} \lambda_v &= 1 \\
    \lambda &\geq 0 \; \forall v \in V(\hyperC).
\end{align}
\end{subequations}
    The number of introduced variables is \(\mathcal{O}(2^k)\). Now consider a general multilinear function \(\multilin\) over \(\hyperC\), that can be written as \[\multilin = \sum_{i=1}^n \monom_i,\] with \(\monom_i: \hyperC_0 \times \hyperC_i \to \bR\) and \(\hyperC = \prod _{i=0}^n\hyperC_i\). \(\hyperC_0\) corresponds to the variables shared by all monomials, while \(\hyperC_i\) are only present in \(\omega_i\). Similarly to the monomial case, the convex hull of \(\Gamma_{\multilin}\) can be modeled by:  
    \begin{equation}\label{eq: multilinear dual formulation}
        \ConvexHull{\Gamma_{\multilin}}=\ConvexHull{\{(x,w) \in \bR^{k+1} \mid \multilin(x) = w,\; x \in V(\hyperC) \}}.
    \end{equation}
    Equation \eqref{eq: multilinear dual formulation} is used in \cite{guo2025tightening} to implement bilinear constraints. However, rather than embedding the dual convex envelope of the cycle constraints directly into the model, their approach operates iteratively: given a candidate solution, they verify whether it lies within the dual representation of the convex envelope of each cycle constraint. If it does not, a corresponding Benders cut is generated and added to the main problem. The problem is then re-solved to obtain an updated candidate solution. The problem is repeatedly solved until a candidate solution is found which lies within the dual representation of the convex envelope of each cycle constraint.
    
 A drawback of the dual formulation~\eqref{eq: multilinear dual formulation} is that it introduces \(\mathcal{O}\!\left(2^{\sum_{i=0}^n n_i}\right)\) variables. Indeed, Proposition~\ref{prop: multilin convex hull}\footnote{In~\cite{Meyer2005}, \(P_0\) is assumed to be a simplex; here we provide a simple but useful generalization in which \(P_0\) is an arbitrary hypercube.} first introduced in~\cite{Meyer2005}, provides conditions under which modeling each monomial separately using~\eqref{eq: dual formulation} yields the same relaxation as applying the dual formulation to the entire multilinear function via~\eqref{eq: multilinear dual formulation}. Under these conditions, the number of introduced variables is reduced to \(\mathcal{O}\!\left(\sum_{i=0}^n 2^{\,n_0 + n_i}\right)\).

\begin{proposition}\label{prop: multilin convex hull}
Consider a multilinear function \(f\) over \(\hyperC\), that can be written as \[f = \sum_{i=1}^n \monom_i,\] with \(\monom_i: \hyperC_0 \times \hyperC_i \to \bR\) linear over \(\hyperC_0\) and \(\hyperC = \prod _{i=0}^n\hyperC_i\). Then the convex hull of \(f\) over \(\hyperC\) is given by:
\[
\ConvexHull{\Gamma_{f}} =\{(x,w) \in \bR^{\sum_{i=0}^n n_i + 1} \mid w = \sum_{i=1}^n w_i,\; (x_0,x_i,w_i) \in \ConvexHull{\Gamma_{\monom_i}}\}.
\]
\end{proposition}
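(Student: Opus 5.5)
We prove the two inclusions separately. Write $x=(x_0,x_1,\dots,x_n)$ with $x_i\in\hyperC_i$, and let $\mathcal{R}$ denote the right-hand side of the statement. We use throughout the vertex description of convex hulls of multilinear graphs recalled above from~\cite{Rikun1997}: $\ConvexHull{\Gamma_f}$ is the convex hull of the points $(v,f(v))$ with $v\in V(\hyperC)$, and likewise for each $\ConvexHull{\Gamma_{\monom_i}}$ with $V(\hyperC_0\times\hyperC_i)$.

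\emph{Inclusion $\ConvexHull{\Gamma_f}\subseteq\mathcal{R}$.} This direction is routine. Take $(x,w)=\sum_v\lambda_v(v,f(v))$, a convex combination over $v\in V(\hyperC)$. Define $w_i\coloneqq\sum_v\lambda_v\monom_i(v_0,v_i)$. Then $\sum_i w_i=w$, because $f=\sum_i\monom_i$ pointwise. Moreover, $(x_0,x_i,w_i)$ is a convex combination of the points $(v_0,v_i,\monom_i(v_0,v_i))$, which lie in $\Gamma_{\monom_i}$. Projecting the weights $\lambda$ onto the coordinates of $\hyperC_0\times\hyperC_i$ therefore gives membership in $\ConvexHull{\Gamma_{\monom_i}}$.

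\emph{Inclusion $\mathcal{R}\subseteq\ConvexHull{\Gamma_f}$.} Here we use a gluing argument. For each $i$, write $(x_0,x_i,w_i)=\sum\mu^i_{(v_0,v_i)}(v_0,v_i,\monom_i(v_0,v_i))$ and read $\mu^i$ as a probability law of a random vector $(X_0,X_i)$ supported on vertices. If all the $\mu^i$ have the same $X_0$-marginal $\rho$, we can build a joint law on $V(\hyperC)$ in which $X_1,\dots,X_n$ are conditionally independent given $X_0$ and each pair $(X_0,X_i)$ has law $\mu^i$. This law has mean $x$, and the expectation of $f$ under it equals $\sum_i\mathbb{E}_{\mu^i}[\monom_i]=\sum_i w_i=w$, because $f$ is a sum of functions each depending only on $(x_0,x_i)$. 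This is exactly a convex combination of the points $(v,f(v))$, so $(x,w)\in\ConvexHull{\Gamma_f}$. This is Meyer's argument. When $\hyperC_0$ is a simplex, the common marginal is automatic, since barycentric coordinates are unique.

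\emph{Main obstacle: equalizing the $X_0$-marginals on a hypercube.} On a hypercube $\hyperC_0$ the marginals of the $\mu^i$ need not agree, and this is where the hypothesis that $\monom_i$ is linear over $\hyperC_0$ must be used. The first attempt is to replace each $\mu^i$ by a law with a prescribed common $X_0$-marginal $\rho$ (for instance the product law with mean $x_0$). We would keep the conditional law of $X_i$ and re-sample $X_0$ so that $\mathbb{E}[X_0]$ and $\mathbb{E}[X_i]$ are unchanged. Linearity in $x_0$ makes $\mathbb{E}[\monom_i]$ depend on $X_0$ only through first moments, namely through $\mathbb{E}[X_0]$ and the cross moments $\mathbb{E}[X_i\text{-part}\cdot X_0]$.

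The cross moments are the delicate part. If "linear over $\hyperC_0$" means that $\monom_i(x_0,x_i)=\alpha_i^\top x_0+g_i(x_i)$, or more generally that the $x_0$-coefficients do not depend on $x_i$, then $\mathbb{E}[\monom_i]=\alpha_i^\top x_0+\mathbb{E}[g_i(X_i)]$. The re-sampled law then preserves $w_i$, and the gluing above concludes the proof. If instead the coefficients of $x_0$ involve $x_i$, then re-sampling breaks the correlation between $X_0$ and $X_i$ and can change $w_i$.

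So the plan is to make the hypothesis precise in the former sense and carry out the re-sampling step carefully. As a fallback, we would reduce to Meyer's simplex case by choosing a triangulation of $\hyperC_0$ such that every $\mu^i$ can be pushed onto the vertices of one common simplex containing $x_0$. Affinity of each $\monom_i$ on that simplex would then keep the $w_i$ unchanged.
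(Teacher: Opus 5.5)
\textbf{Overall.} Your easy inclusion is correct, and so is your gluing argument for a simplex $P_0$. The gluing is a valid probabilistic substitute for the paper's route, which applies Rikun's theorem to both the convex and the concave envelope and then splits $w$ among the $w_i$ by an intermediate-value argument. The genuine gap is the hypercube case, which is the real content of the statement.

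\textbf{Why your plan does not close the hypercube case.} Your main plan restricts ``linear over $P_0$'' to $x_0$-coefficients that do not depend on $x_i$. Then $\omega_i(x_0,x_i)=\alpha_i^\top x_0+g_i(x_i)$ is separable and the claim is trivial. It also excludes exactly the cases the proposition is meant for, e.g.\ $x_{01}x_1+x_{01}x_2+x_{02}x_3$ with $P_0=[0,1]^2$. Your fallback does not work either. Since $\omega_i(\cdot,v_i)$ is affine, $w_i=\sum_{v_i}\mu^i(v_i)\,\omega_i(m^i_{v_i},v_i)$ with $m^i_{v_i}=\mathbb{E}_{\mu^i}[X_0\mid X_i=v_i]$. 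Re-routing $\mu^i$ onto the vertices of a simplex $S\ni x_0$ therefore preserves $w_i$ only if all these conditional means lie in $S$, and they need not.

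\textbf{Your worry is justified.} For general multilinear $\omega_i$ whose $x_0$-coefficients depend on $x_i$, the statement is false. Take $\omega_1=-x_1(x_{01}+x_{02})$ and $\omega_2=-x_2(x_{01}-x_{02})$ on $[0,1]^4$, at the center. The envelopes of $\omega_1$ and $\omega_2$ there are $-1$ and $-\tfrac12$. In coordinates $(x_{01},x_{02},x_i)$ these are attained with weight $\tfrac12$ on $(1,1,1),(0,0,0)$, resp.\ on $(1,0,1),(0,1,0)$. So the right-hand side allows $w=-\tfrac32$. On the other hand, $-x_{01}-x_1\le f$ at all $16$ vertices, so the convex envelope of $f$ at the center is at least $-1$.

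\textbf{The missing idea.} The paper's proof relies on the $\omega_i$ being monomials. Linearity over $P_0$ then means each $\omega_i$ contains at most one coordinate $x_{0,j(i)}$ of $x_0$. The fix in your language:
\begin{enumerate}
\item Marginalize each $\mu^i$ onto $(X_{0,j(i)},X_i)$, which is all that $\omega_i$ depends on.
\item On the interval $[a_j,b_j]$, which is a simplex, the two-point law of $X_{0,j}$ is forced by its mean $x_{0,j}$. Hence all marginals with the same $j(i)$ agree.
\item Take the coordinates of $X_0$ independent. Draw each $X_i$ from $\mu^i(\cdot\mid X_{0,j(i)})$, conditionally independently given $X_0$. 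Monomials containing no coordinate of $x_0$ are drawn independently from their own marginals.
\end{enumerate}
Your expectation computation then goes through verbatim. This is exactly the paper's two-step reduction in probabilistic form. Grouping the monomials by their shared coordinate gives blocks with disjoint variables (the case $n_0=0$), and the interval case $n_0=1$ is applied inside each block.
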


Proposition \ref{prop: multilin convex hull} follows  directly from the following Theorem in~\cite{Rikun1997}:
\begin{theorem}\label{thm: Rikun}
Let \(\hyperC\) be a Cartesian product of polytopes, \(\hyperC = \prod_{i=0}^n \hyperC_i\), \(\hyperC_i \subset \bR^{n_i}\), and let \(\monom_i(x_0,x_i)\) be a continuous function defined on \(\hyperC_0\times \hyperC_i,\;i=1,\ldots,n\). If each \(\monom_i(x_0,x_i)\) is a concave function of \(x_0\) when \(x_i\) is fixed and \(P_0\) is a simplex, then
\begin{equation*}
    \ConvexHull{\sum_{i=1}^n\monom_i}(x) = \sum_{i=1}^n \ConvexHull{\monom_i}(x_0,x_i).
\end{equation*}
\end{theorem}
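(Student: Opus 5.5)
We prove two inequalities. One is immediate. The other uses concavity in \(x_0\) to push all mass onto the vertices of \(\hyperC_0\). Since \(\hyperC_0\) is a simplex, the resulting weights are the same for every \(i\).

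\textbf{Easy inequality.} The function \(\sum_{i=1}^n \ConvexHull{\monom_i}(x_0,x_i)\) is convex on \(\hyperC\). It also underestimates \(\sum_i \monom_i\) pointwise. The convex envelope is the largest convex underestimator, so
\[
\sum_i \ConvexHull{\monom_i} \le \ConvexHull{\textstyle\sum_i\monom_i}.
\]

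\textbf{Step 1: reduce each \(\ConvexHull{\monom_i}\) to the vertices of \(\hyperC_0\).} Let \(e_1,\dots,e_m\) be the vertices of \(\hyperC_0\), and let \(\alpha_k(x_0)\) be the barycentric coordinates of \(x_0\), which are unique. I will show
\begin{equation*}
\ConvexHull{\monom_i}(x_0,x_i)=\inf\Bigl\{\sum_{k}\alpha_k(x_0)\,g_i^k(z^k_i)\;:\; z^k_i\in\hyperC_i,\ \sum_k \alpha_k(x_0) z^k_i = x_i\Bigr\},
\end{equation*}
where \(g_i^k\) is the convex envelope of \(x_i\mapsto\monom_i(e_k,x_i)\) on \(\hyperC_i\).
\begin{itemize}
\item[\(\ge\):] Start from any convex combination \((x_0,x_i)=\sum_j\mu_j(y_0^j,y_i^j)\). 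Concavity in \(x_0\) gives \(\monom_i(y_0^j,y_i^j)\ge\sum_k\alpha_k(y_0^j)\monom_i(e_k,y_i^j)\). Regroup by \(k\), setting \(\nu_{kj}=\mu_j\alpha_k(y_0^j)\). Because barycentric coordinates are affine, \(\sum_j \nu_{kj}=\alpha_k(x_0)\). The points \(z_i^k=\sum_j \nu_{kj} y_i^j/\alpha_k(x_0)\) average to \(x_i\). Bounding each inner sum below by \(g_i^k(z_i^k)\) yields the claim.
\item[\(\le\):] Any admissible \((z_i^k)\), together with near-optimal representations of each \(g_i^k(z_i^k)\), gives a convex combination of points of \(\Gamma_{\monom_i}\) whose average is \((x_0,x_i)\).
\end{itemize}

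\textbf{Step 2: glue the monomials.} Fix \(x\) and \(\varepsilon>0\). For each \(i\), choose \(z_i^k\) that are \(\varepsilon\)-optimal in Step 1. The crucial point is that the weights \(\alpha_k(x_0)\) do not depend on \(i\), because \(\hyperC_0\) is a simplex. Hence the points
\[
x^k=(e_k,z_1^k,\dots,z_n^k)\in\hyperC
\]
satisfy \(\sum_k\alpha_k(x_0)x^k=x\). By convexity,
\[
\ConvexHull{\textstyle\sum_i\monom_i}(x)\le\sum_k\alpha_k(x_0)\,\ConvexHull{\textstyle\sum_i\monom_i}(x^k).
\]
Each \(x^k\) lies on the face \(\{e_k\}\times\prod_i\hyperC_i\). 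On that face, the envelope of the full function is at most the envelope of its restriction. The restriction \(\sum_i\monom_i(e_k,x_i)\) is separable in the disjoint blocks \(x_1,\dots,x_n\), so its envelope over the product \(\prod_i\hyperC_i\) equals \(\sum_i g_i^k\). This is standard: combine convex representations of each block via product weights. Therefore
\[
\ConvexHull{\textstyle\sum_i\monom_i}(x)\le\sum_i\sum_k\alpha_k(x_0)g_i^k(z_i^k)\le\sum_i\ConvexHull{\monom_i}(x_0,x_i)+n\varepsilon.
\]
Letting \(\varepsilon\to 0\) gives the reverse inequality.

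\textbf{Main obstacle.} The delicate part is Step 1 together with the consistency used in Step 2. The decomposition along the vertices of \(\hyperC_0\) must carry the same weights for all \(i\), and this is exactly where the simplex assumption enters. If \(\hyperC_0\) were a general polytope (for instance a hypercube, as in Proposition~\ref{prop: multilin convex hull}), the barycentric weights would not be unique. Different monomials could then prefer incompatible decompositions, and an additional common-triangulation argument in the spirit of Meyer would be required. Minor technical points are compactness and continuity, which make the infima finite and justify the \(\varepsilon\)-arguments.
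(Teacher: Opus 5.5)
The paper contains no proof of this statement. It quotes the theorem from Rikun~\cite{Rikun1997} and uses it only as a black box to derive Proposition~\ref{prop: multilin convex hull}. Your proposal therefore supplies a self-contained argument the paper does not have, and it is correct.

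The steps all check out:
\begin{itemize}
\item The easy inequality: $\sum_i \mathrm{conv}\,\omega_i$ is a convex underestimator of $\sum_i\omega_i$ on $P$.
\item Step~1: concavity in $x_0$ moves each sample point onto the vertex slices $\{e_k\}\times P_i$. Affinity of the barycentric coordinates then gives $\sum_j\nu_{kj}=\alpha_k(x_0)$ and $\sum_k\alpha_k(x_0)z_i^k=x_i$.
\item Step~2: on the face $\{e_k\}\times\prod_{i\ge 1}P_i$, the envelope of the full function is bounded by the envelope of its restriction. That restriction is separable over a product, so its envelope is additive.
\end{itemize}

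Two cosmetic points. First, if $\alpha_k(x_0)=0$ your formula for $z_i^k$ divides by zero; but then every $\nu_{kj}=0$ and any $z_i^k\in P_i$ works. Second, only the ``$\ge$'' half of Step~1 is used in Step~2, so the ``$\le$'' half is true but superfluous. Continuity on compact sets also makes all infima attained, so the $\varepsilon$'s are dispensable.

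As for what each route buys: the citation keeps the paper short. Your proof shows exactly where each hypothesis enters. Concavity pushes mass onto the vertex slices, and the simplex makes the weights $\alpha_k(x_0)$ unique, hence common to all $i$.

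Your closing remark is also right, and the simplex hypothesis is genuinely needed. Take $P_0=[0,1]^2$ with trivial private blocks and the concave functions $\min(x_{01},x_{02})$ and $\min(x_{01},1-x_{02})$. Their envelopes are $\max(0,x_{01}+x_{02}-1)$ and $\max(0,x_{01}-x_{02})$, both zero at $(\tfrac12,\tfrac12)$. The envelope of their sum is $x_{01}$, which equals $\tfrac12$ there.

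This is why the paper's extension of Proposition~\ref{prop: multilin convex hull} to a hypercube $P_0$ is not obtained by applying the theorem with $P_0$ equal to the hypercube. Instead it applies the theorem group by group with one-dimensional (simplex) $P_0$'s. That relies on each monomial being linear in $x_0$, and hence containing at most one variable of $P_0$.
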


In particular, if \(\omega_i\) are linear functions with respect to \(x_0\), when \(x_i\) is fixed, we have:
\begin{align*}
        \ConvexHull{\sum_{i=1}^n\monom_i}(x) = \sum_{i=1}^n \ConvexHull{\monom_i}(x_0,x_i) \\
        \ConcaveHull{\sum_{i=1}^n\monom_i}(x) = \sum_{i=1}^n \ConcaveHull{\monom_i}(x_0,x_i)
\end{align*}
We can now prove Proposition~\ref{prop: multilin convex hull}.
\begin{proof}
     First assume that \(P_0\) is a simplex, that is the convex hull of \(n_0+1\) affinely independent points. Then
     \((x,w) \in \ConcaveHull{\Gamma_{\multilin}} \) if and only if 
     \[ \sum_{i=1}^n \ConvexHull{\monom_i}(x_0,x_i) = \ConvexHull{f}(x) \leq w \leq \ConvexHull{f}(x) =  \sum_{i=1}^n \ConcaveHull{\monom_i}(x_0,x_i). \] Since \(\ConvexHull{\monom_i}(x_0,x_i) \leq \ConcaveHull{\monom_i}(x_0,x_i)\) and \((w_1,\ldots,w_n) \mapsto \sum w_i\) is continuous, by the intermediate value theorem there exists \(w_i \in \bR \) such that \(\ConvexHull{\monom_i}(x_0,x_i) \leq w_i \leq \ConcaveHull{\monom_i}(x_0,x_i)\) and \(\sum_i w_i = w\). Thus \((x_0,x_i,w_i) \in \ConvexHull{w_i}\) and \((x,w) \in \{(x,w) \in \bR^{\sum_{i=0}^n n_i + 1} \mid w = \sum_{i=1}^n w_i,\; (x_0,x_i,w_i) \in \ConvexHull{\Gamma_{\monom_i}}\}\). Conversely, if \((x_0,x_i,w_i) \in \ConvexHull{\Gamma_{\monom_i}}\) then clearly \((x, w) \in \ConvexHull{\Gamma_{\multilin}}\) with \(w \coloneqq \sum_i w_i\).

  In particular, for \(n_0 = 0\), we obtain the trivial case where the sum involves monomials with no variables in common.  
For \(n_0 = 1\), since the corresponding interval is a simplex, the proposition holds for generic intervals—this corresponds to the case in which all monomials share exactly one variable.  

When \(P_0\) is a generic hypercube, note that \(\omega_j\) is linear with respect to the variables in \(P_0\) when the variables in \(P_i\) (for \(i > 0\)) are fixed. Hence, exactly one variable from \(P_0\) appears in each \(\omega_j\).  
We can therefore proceed in two steps: first, group all monomials sharing the same variable together (the case \(n_0 = 0\)), and then apply the result to each such group (the case \(n_0 = 1\)).
\end{proof}
Thus, \(\ConvexHull{\Gamma_{f}}\) can be represented by:

\begin{subequations}
   \begin{align}
    w &= \sum_{i=1}^n w_i \\
    w_i &= \sum_{v \in V(\hyperC_0)\times V(\hyperC_i)} \lambda_v \monom_i(v_0,v_i) \quad \forall i=1,\ldots,n\\
    (x_0,x_i) &= \sum_{v \in  V(\hyperC_0)\times V(\hyperC_i)} \lambda_v (v_0,v_i)  \quad \forall i=1,\ldots,n\\
    \sum_{v \in  V(\hyperC_0)\times V(\hyperC_i)} \lambda_v &= 1  \quad \forall i=1,\ldots,n \\
    \lambda &\geq 0 \; \forall v \in V(\hyperC)
\end{align} 
\end{subequations}
where for every \(v \in V(\hyperC_0)\times V(\hyperC_i)\), \(v_j \coloneqq \pi_{V(\hyperC_j)} \in \bR^{n_j}\), with \(\pi\) the linear projection.

In the context of ACOPF, it is of particular interest to understand how these results apply to the various formulations of the cycle constraints. In particular, one can verify that constraints~\eqref{eq: 3 cycle constraint}, \eqref{eq: 3 cycle constr var}, \eqref{eq: 4 cycle constraint}, and~\eqref{eq: 4 cycle constr var} all satisfy the conditions of Proposition~\ref{prop: multilin convex hull}. As a consequence, modeling the convex hull of each of these individual multilinear functions is equivalent to modeling the convex hull of their constituent monomials separately.

However, Proposition~\ref{prop: multilin convex hull} does not address how these two relaxations compare when multiple multilinear constraints are imposed simultaneously. In the next section we explore this direction for the case of bilinear constraints, showing that the full dual formulation of a collection of bilinear constraints is equivalent to the McCormick formulation whenever the associated interaction graph is a tree.
\subsection{Bilinear convex cut}
\nc{\vars}{V}
\nc{\edges}{E}
\nc{\graph}{G}
\nc{\MC}{MC}

Bilinear functions can be represented by a graph in which each variable corresponds to a node and each binomial term defines an edge connecting the two associated variables. In this section we extend known results on the exactness of the convex hull of a single bilinear function to the convex hull of multiple bilinear functions. Consider the variable set \(\vars\) and the edge set \(\edges\), where each \((i,j)\in E\) indicates that the binomial term \(x_i x_j\) appears in one of the bilinear functions, with \(x_i \in [x_i^0,x_i^1]\) and denote \(\graph = (\vars,\edges)\). The convex hull of the multigraph \(\Gamma = \{(x_i,y_e)_{i \in \vars, \arc \in \edges} \mid x_i\in [x_i^0,x_i^1], y_{(i,j)}=x_ix_j\) \} admits the following extremal-point representation:
\begin{align*}
     \ConvexHull{\Gamma(\graph)}  = \{ & (x_i,y_e)_{i \in \vars, \arc \in \edges}\} \mid x_i\in [x_i^0,x_i^1], x_i = \\ & = \sum_{w \in \{0,1\}^V}\lambda_w x^{w_i}_i,\, y_{(h,k)} = \\
     & = \sum_{w \in \{0,1\}^V}\lambda_w x^{w_h}_hx^{w_{\bus}}_{\bus}, \text{where \(\lambda_{w}\) are the} \\ & \text{ coefficients of a convex linear combination} \}.
\end{align*}
   
As this representation entails  the use of \(2^{|\vars|}\) additional variables, we are interested in when \(\ConvexHull{\Gamma}\) can be obtained by considering the  McCormick relaxation for each binomial instead, that is the set:
\begin{align*}
    \MC(\graph) = \{ & (x_i,y_e)_{i \in \vars, \arc \in \edges}\} \mid x_i\in [x_i^0,x_i^1],\;  x_j\in [x_j^0,x_j^1], \\ 
    & x_i = \sum_{w \in \{0,1\}^2}\lambda^{i,j}_w x^{w_0}_i,\,x_j = \\
    & = \sum_{w \in \{0,1\}^2}\lambda^{i,j}_w x^{w_1}_j,\, y_{(i,j)} = \\
    & = \sum_{w \in \{0,1\}^V}\lambda_w x^{w_h}_hx^{w_{\bus}}_{\bus}, \text{ where } \sum_{w \in \{0,1\}^2}\lambda^{(i,j)}_w=1\}.
\end{align*}

\begin{proposition}\label{prop: binomes convex hull}
    If \(\graph\) is a tree, then 
    \begin{equation*}
        \ConvexHull{\Gamma} = \MC(\graph).
    \end{equation*}
\end{proposition}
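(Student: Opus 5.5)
The inclusion $\ConvexHull{\Gamma}\subseteq \MC(\graph)$ is immediate: every point of $\Gamma$ satisfies each McCormick system, and $\MC(\graph)$ is convex. For the reverse inclusion we proceed by induction on the number of edges of the tree $\graph$, gluing distributions along leaves. We read a point of $\MC(\graph)$ probabilistically. For each edge $(i,j)$, the weights $\lambda^{(i,j)}_w$, $w\in\{0,1\}^2$, form a probability distribution on the four vertices of $[x_i^0,x_i^1]\times[x_j^0,x_j^1]$. Its marginal on $x_i$ is the unique two-point distribution on $\{x_i^0,x_i^1\}$ with mean $x_i$, namely mass $(x_i^1-x_i)/(x_i^1-x_i^0)$ on $x_i^0$. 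The value $y_{(i,j)}$ is the expectation of the product under this distribution. The goal is to produce a single distribution $\lambda$ on $\{0,1\}^{\vars}$ whose restriction to every edge $(i,j)$ is exactly $\lambda^{(i,j)}$. Once this holds, $x_i=\sum_w\lambda_w x_i^{w_i}$ and $y_{(h,k)}=\sum_w \lambda_w x_h^{w_h}x_k^{w_k}$, so the point lies in $\ConvexHull{\Gamma(\graph)}$.

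\emph{Base case.} A single edge is Proposition~\ref{prop: multilin convex hull} with one monomial; equivalently, the McCormick hull of a single binomial is its convex hull.

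\emph{Inductive step.} Pick a leaf $\ell$ of $\graph$ with unique neighbour $p$, and let $\graph'=\graph-\ell$, which is again a tree. By induction there is a distribution $\lambda'$ on $\{0,1\}^{\vars\setminus\{\ell\}}$ matching all edge distributions of $\graph'$. The marginals of $\lambda'$ and of $\lambda^{(p,\ell)}$ on the coordinate $w_p$ coincide, since both are the two-point distribution determined by $x_p$. We therefore glue the two by conditional independence:
\[
\lambda_{(w',w_\ell)}=\lambda'_{w'}\,\frac{\lambda^{(p,\ell)}_{(w'_p,w_\ell)}}{\mu_p(w'_p)},
\]
where $\mu_p$ is the common marginal on $w_p$, and the ratio is set to $0$ when $\mu_p(w'_p)=0$. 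Summing over $w_\ell$ recovers $\lambda'$. Summing over all coordinates other than $p$ and $\ell$ recovers $\lambda^{(p,\ell)}$. Hence every edge constraint of $\graph$ is met.

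The main point needing care is the consistency of the marginals. It relies on the uniqueness of the two-point representation of $x_i$ on $\{x_i^0,x_i^1\}$, which holds whenever $x_i^0<x_i^1$; degenerate intervals should be handled by fixing the variable. Note also that the tree hypothesis is exactly what makes the gluing possible: in a cycle, the pairwise distributions can be locally consistent but not globally realizable. The paper's definition of $\MC(\graph)$ has index typos, so we interpret it as one convex multiplier family per edge, constrained only through the shared $x_i$; this is the interpretation used above.
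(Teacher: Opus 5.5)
Your proposal is correct and follows essentially the same route as the paper: induction on the number of edges, deletion of a leaf $\ell$ with neighbour $p$, and gluing the inductive distribution $\lambda'$ to $\lambda^{(p,\ell)}$ via exactly the conditional-independence weights $\lambda'_{w'}\lambda^{(p,\ell)}_{(w'_p,w_\ell)}/\mu_p(w'_p)$ that the paper uses. Your version is slightly more complete: you state the easy inclusion $\mathrm{conv}(\Gamma)\subseteq \mathrm{MC}(G)$, and you justify the marginal identity $\mu_p(z)=\lambda^{(p,\ell)}_{(z,0)}+\lambda^{(p,\ell)}_{(z,1)}$, which the paper merely asserts, by the uniqueness of the two-point representation of $x_p$ when $x_p^0<x_p^1$ (with a remark on the degenerate case).
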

\begin{proof}
   If \(|E|=1\), the formulations coincide so the statement is trivially true. Assume the statement holds for all trees with \(m-1\) edges, and let us prove it for a tree with \(m\) edges.  
Let \((x_i, x_e)_{i \in \vars,\, e \in \edges} \in \MC(\graph)\).  
Choose a leaf \(k \in \vars\), and let \((h,k) \in \edges\) be its unique incident edge.  
Set \(\edges' \coloneqq \edges \setminus \{(h,k)\}\) and \(\vars' \coloneqq \vars \setminus \{\bus\}\), and define \(\graph' \coloneqq (\vars', \edges')\).

By construction, \((x_i, y_e)_{i \in \vars',\, e \in \edges'} \in \MC(\graph')\).  
By the induction hypothesis, \((x_i, y_e)_{i \in \vars',\, e \in \edges'} \in \ConvexHull{\Gamma'}\).  
Hence there exist coefficients \(\lambda'_{w'}\) for \(w' \in \{0,1\}^{V'}\) such that  
\[
    x_i = \sum_{w' \in \{0,1\}^{V'}} \lambda'_{w'} x_i^{w'_i},
    \qquad
    y_{(i,j)} = \sum_{w' \in \{0,1\}^{V'}} \lambda'_{w'} x_i^{w'_i} x_j^{w'_j}
    \quad \text{for all } (i,j) \in \edges'.
\]

Since \((x_h, x_{\bus}, y_{(h,k)}) \in \MC(\graph)\), there exist convex combination weights \(\lambda^{h,k}_{w}\) for \(w \in \{0,1\}^2\) such that  
\[
    (x_h, x_{\bus}, y_{(h,k)}) 
    = \sum_{w \in \{0,1\}^2} 
    \lambda^{h,k}_{w}\,\bigl(x^{w_h}_h,\; x^{w_{\bus}}_{\bus},\; x^{w_h}_h x^{w_{\bus}}_{\bus} \bigr).
\]

We now extend coefficients \(\lambda'_{w'}\) (indexed by \(w' \in \{0,1\}^{V'}\)) to coefficients \(\lambda_w\) indexed by \(w \in \{0,1\}^{V}\), so that  
\[
    \lambda'_{w'} 
    = \lambda_{(w',0)} + \lambda_{(w',1)}
    \qquad \text{for all } w' \in \{0,1\}^{V'}.
\]

For \(i \in \vars'\) and \(z \in \{0,1\}\), define  
\[
    \mu_i^z \coloneqq \sum_{\substack{w' \in \{0,1\}^{V'} \\ w'_i = z}} \lambda'_{w'}.
\]
Note that \(\mu_h^z = \lambda^{h,k}_{(z,\,0)} + \lambda^{h,k}_{(z,\,1)}\).
We then set
\[
\lambda_{(w',z)} 
=  
\begin{cases}
\lambda'_{w'}\dfrac{\lambda^{h,k}_{(z',z)}}{\mu_h^{z'}} & \text{if } \mu_h^{z'} > 0,\\
0 & \text{otherwise}.
\end{cases}
\]

For the case $\mu_h^{w'_h} \neq 0$, we show that these coefficients form a valid convex combination of the extreme points of \(\ConvexHull{\Gamma}\), yielding \((x_i, y_{h,k})_{i \in \vars,\,(h,k)\in\edges}\). For all \(i \neq \bus\) we have:

\begin{align*}
    \sum_{w \in \{0,1\}^V} \lambda_w\, x_i^{w_i}
    &= \sum_{\substack{w \in \{0,1\}^V \\ w_{\bus} = 0}} 
        \lambda_{(w',0)}\, x_i^{w'_i}
     + \sum_{\substack{w \in \{0,1\}^V \\ w_{\bus} = 1}} 
        \lambda_{(w',1)}\, x_i^{w'_i} \\
    &= \sum_{w' \in \{0,1\}^{V'}} 
        \lambda'_{w'} \left(
            \frac{\lambda^{h,k}_{(w'_h,0)}}{\mu_h^{\,w'_h}}
            + 
            \frac{\lambda^{h,k}_{(w'_h,1)}}{\mu_h^{\,w'_h}}
        \right) x_i^{w'_i} \\
    &= \sum_{w' \in \{0,1\}^{V'}} 
        \lambda'_{w'} \, x_i^{w'_i} = x_i,
\end{align*}
where the last equality follows from  
\[
\frac{\lambda^{h,k}_{(w'_h,0)}}{\mu_h^{\,w'_h}}
\;+\;
\frac{\lambda^{h,k}_{(w'_h,1)}}{\mu_h^{\,w'_h}}
= 1.
\]

If \(i = \bus\), then
\begin{align*}
    \sum_{w \in \{0,1\}^V} \lambda_w x_{\bus}^{w_{\bus}}
    &= 
    \sum_{\substack{w \in \{0,1\}^V \\ w_{\bus} = 0}} 
        \lambda_{(w',0)} \, x_{\bus}^{0}
    +
    \sum_{\substack{w \in \{0,1\}^V \\ w_{\bus} = 1}} 
        \lambda_{(w',1)} \, x_{\bus}^{1} 
        \\
    &=
    \sum_{w' \in \{0,1\}^{V'}}
        \lambda'_{w'} 
        \frac{\lambda^{h,k}_{(w'_h,0)}}{\mu_h^{\,w'_h}}
        \, x_{\bus}^{0}
    +
    \sum_{w' \in \{0,1\}^{V'}} 
        \lambda'_{w'}
        \frac{\lambda^{h,k}_{(w'_h,1)}}{\mu_h^{\,w'_h}}
        \, x_{\bus}^{1}.
\end{align*}

The first sum expands as
\[
\sum_{w' \in \{0,1\}^{V'}}
    \lambda'_{w'} 
    \frac{\lambda^{h,k}_{(w'_h,0)}}{\mu_h^{\,w'_h}} 
    x_{\bus}^{0}
=
\sum_{\substack{w' \in \{0,1\}^{V'} \\ w'_h = 0}}
    \lambda'_{w'} 
    \frac{\lambda^{h,k}_{(0,0)}}{\mu_h^{0}}
    x_{\bus}^{0}
+
\sum_{\substack{w' \in \{0,1\}^{V'} \\ w'_h = 1}}
    \lambda'_{w'}
    \frac{\lambda^{h,k}_{(1,0)}}{\mu_h^{1}}
    x_{\bus}^{0}.
\]

Using the definitions
\[
\mu_h^0 = \sum_{\substack{w' : w'_h = 0}} \lambda'_{w'},
\qquad
\mu_h^1 = \sum_{\substack{w' : w'_h = 1}} \lambda'_{w'},
\]
we obtain
\[
\sum_{w' \in \{0,1\}^{V'}}
    \lambda'_{w'} 
    \frac{\lambda^{h,k}_{(w'_h,0)}}{\mu_h^{\,w'_h}} 
    x_{\bus}^{0}
=
\lambda^{h,k}_{(0,0)} + \lambda^{h,k}_{(1,0)}.
\]

Similarly,
\[
\sum_{w' \in \{0,1\}^{V'}} 
    \lambda'_{w'}
    \frac{\lambda^{h,k}_{(w'_h,1)}}{\mu_h^{\,w'_h}}
    x_{\bus}^{1}
=
\lambda^{h,k}_{(0,1)} + \lambda^{h,k}_{(1,1)}.
\]

Hence,
\[
\sum_{w \in \{0,1\}^V} \lambda_w x_{\bus}^{w_{\bus}}
    =
    x^0_{\bus} \cdot (\lambda^{h,k}_{(0,0)} + \lambda^{h,k}_{(1,0)})
    +
    x^1_{\bus} \cdot (\lambda^{h,k}_{(0,1)} + \lambda^{h,k}_{(1,1)})
    = x_{\bus},
\]

A similar verification applies to the edge variables \(y_{i,j}\) for \((i,j) \in \edges'\) and to edge \((h,k)\), as well as in the case $\mu_h^{w'_h} = 0$. This follows from the fact that $0 = \mu_h^{w'_h} = \lambda^{h,k}_{(w'_h,\,0)} + \lambda^{h,k}_{(w'_h,\,1)}$ implies
$\lambda^{h,k}_{(w'_h,0)} = \lambda^{h,k}_{(w'_h,1)} = 0$, and consequently 
$\lambda'_{w'} = \lambda^{h,k}_{(w'_h,0)} +\lambda^{h,k}_{(w'_h,1)} = 0$ for all such $w'$.
\end{proof}

Note that the graph associated with the bilinear terms of the cycle constraint formulation~\eqref{eq: 3 cycle constr var} is indeed a tree, and therefore Proposition~\ref{prop: binomes convex hull} applies. In contrast, this proposition cannot be applied to the formulation~\eqref{eq: 4 cycle constr var}, for which the associated graph is not a tree.



\section{Conclusions}
This paper presented a structured review of strengthening techniques for the Jabr second-order cone relaxation of the ACOPF and developed a unified perspective grounded in multilinear convexification and graph structure. By interpreting cycle consistency conditions as multilinear equalities and analyzing their interaction graphs, we connected cycle-based cuts, McCormick relaxations, and dual extended formulations within a common geometric framework.

Our analysis clarifies that strengthening the Jabr relaxation can be understood as convexifying structured multilinear constraints induced by network cycles. In particular, we identify structural conditions under which primal and dual relaxations coincide and highlight the distinction between convexifying interaction graphs and convexifying feasible sets.

This viewpoint suggests several directions for further research, including tighter characterizations of cycle-based convex hulls, scalable extended formulations for dense subgraphs, and integration of graph-structured convexification techniques within global optimization frameworks. We hope that the unified framework presented here will facilitate the systematic design of stronger and more interpretable relaxations for ACOPF and related network optimization problems.

\section*{Acknowledgments}

The authors thank Nicol\`o Gusmeroli and Pietro Belotti for reading an earlier version of this manuscript and for their valuable suggestions.
\par 
\medskip
Gabor Riccardi and Stefano Gualandi acknowledge financial support under the National Recovery and Resilience Plan (NRRP), Mission 4, Component 2, Investment 1.1, Call for tender No. 1409 published on 14.9.2022 by the Italian Ministry of University and Research (MUR), funded by the European Union -- NextGenerationEU-- Project Title HEXAGON: Highly--specialized EXact Algorithms for Grid Operations at the National level -- CUP F53D23010010001 - Grant Assignment Decree No. 1379 adopted on 01/09/2023 by the Italian Ministry of Ministry of University and Research (MUR).

\bibliographystyle{plain} 
\bibliography{biblio}

\end{document}